\newtheorem{theorem}{Theorem}
\newtheorem*{prop2}{Lagrange lemma}
\newtheorem*{prop1}{Murnaghan rule}
\newtheorem{lem}{Lemma}
\theoremstyle{remark}
\newtheorem{remark}{Remark}
\numberwithin{equation}{section}
\newcommand\la{\lambda}
\newcommand\nbi[3]{{\binom{#1}{#2}}_{#3}}
\newcommand\npbi[3]{{\genfrac{\langle}{\rangle}{0pt}{}{#1}{#2}}_{#3}}
\title{An explicit formula\\
for the characters of the symmetric group}
\author{Michel Lassalle\\
\small Centre National de la Recherche Scientifique\\[-0.8ex]
\small Institut Gaspard-Monge, Universit\'e de Marne-la-Vall\'ee\\[-0.8ex]
\small 77454 Marne-la-Vall\'ee Cedex, France\\[-0.8ex]
\small \texttt{lassalle @ univ-mlv.fr}\\[-0.8ex]
\small \texttt{http://igm.univ-mlv.fr/{\textasciitilde}lassalle}}
\date{}
\begin{document}
\maketitle
\begin{abstract}
We give an explicit expression of the normalized characters of the symmetric group in terms of the ``contents" of the partition labelling the representation.
\end{abstract}

\section{Introduction}

The characters of the irreducible representations of the symmetric group play an important role in many areas of mathematics. However, since the early work of Frobenius~\cite{Fr} in 1900, no explicit formula was found for them. The characters of the symmetric group were computed through various recursive algorithms, but explicit formulas were only known for about ten particular cases~\cite{Fr,In}. The purpose of this paper is to give such an explicit expression in the general case. 

The irreducible representations of the symmetric group $S_n$ of $n$ letters are labelled by partitions $\la$ of $n$ (i.e. weakly decreasing sequences of positive integers summing to $n$). Their characters $\chi^\la$ are evaluated at a conjugacy class of $S_n$, labelled by a partition $\mu$ giving the cycle-type of the class. 
Let $\chi^\la_\mu$ be the value of the character $\chi^\la$ at a permutation of cycle-type $\mu$. We shall give an explicit formula for  
the normalized character $\hat{\chi}^\la_\mu=\chi^\la_\mu/\textrm{dim}\,\la$. This result was announced in~\cite{La5}.

It should be first emphasized that our formula gives the dependence of $\hat{\chi}^\la_\mu$ with respect to $\la$ in terms of the ``contents'' of this partition. More precisely the normalized character $\hat{\chi}^\la_\mu$ is expressed as some (unique) symmetric function evaluated on the contents of $\la$. 

This description of characters by content evaluation was proved in~\cite{KO} and~\cite{Co}. Previously the importance of contents had been apparent from the works of Jucys~\cite{Ju} and Murphy~\cite{My}. The fact had been noticed by Suzuki~\cite{Su}, Lascoux and Thibon~\cite{LT} and Garsia~\cite{Ga}.  
\newpage
Later tables and conjectures were independently given by Katriel~\cite{Ka} and by the author~\cite[Sections 8-11]{La1}. Actually the conjectures of~\cite{La1} were formulated in the framework of Jack polynomials. But, as emphasized in Section $11$ of~\cite{La1}, once specialized to $\alpha=1$, they correspond to the characters of the symmetric group.

However the symmetric function expressing $\hat{\chi}^\la_\mu$ remained quite obscure, even in the very elementary situation of a partition $\mu$ having only one non-unary part. The purpose of this paper is to give an explicit expression.

It is a second remarkable fact that this symmetric function can only be written by using a new family of positive integers, which we have introduced in~\cite{La2}. The connection of these integers with the symmetric group is still mysterious and certainly needs more investigation.

We emphasize that our method provides a very efficient algorithm, implemented on computer. Tables giving  $\hat{\chi}^\la_\mu$ for $|\mu|-l(\mu)\le 12$ will be available on a web page~\cite{W}.

\section{Notations}

We briefly recall some basic notions about the characters of the symmetric group, referring the reader to~\cite{Go} and to~\cite[Section 1.7]{Ma} for an elementary introduction.

\subsection{Characters}

A partition $\la= (\la_1,...,\la_r)$
is a finite weakly decreasing
sequence of nonnegative integers, called parts. The number
$l(\la)$ of positive parts is called the length of
$\la$, and $|\la| = \sum_{i = 1}^{r} \la_i$
the weight of $\la$. For any integer $i\geq1$,
$m_i(\la) = \textrm{card} \{j: \la_j  = i\}$
is the multiplicity of the part $i$ in $\la$.  Clearly
$l(\la)=\sum_{i\ge1} m_i(\la)$ and
$|\la|=\sum_{i\ge1} im_i(\la)$. We shall also write
$\la= (1^{m_1},2^{m_2},3^{m_3},\ldots)$. We set
\[z_\la  = \prod_{i \ge  1} i^{m_i(\lambda)} m_i(\lambda) ! .\]
We identify $\la$ with its Ferrers diagram  
$\{ (i,j) : 1 \le i \, \le l(\la), 1 \le j \le {\la}_{i} \}$.

Let $n$ be a fixed positive integer and $S_n$ the group of permutations of $n$ letters. Each permutation $\sigma \in S_n$ factorizes uniquely as a product of disjoint cycles, whose respective lengths are ordered such as to form a partition $\mu=(\mu_1,\ldots,\mu_r)$ with weight $n$. This partition is called the cycle-type of $\sigma$ and determines each permutation up to conjugacy in $S_n$. Conjugacy classes are thus labelled by partitions $\mu$ with $|\mu|=n$. 

The irreducible representations of $S_n$ and their corresponding characters are also labelled by partitions $\la$ with weight $|\la|=n$. We write ${\chi}^\la_\mu$ for the value of the character $\chi^\la(\sigma)$ at any permutation $\sigma$ of cycle-type $\mu$.

The dimension $\textrm{dim}\,\la=\chi^\la_{1^n}$ of the representation $\la$ is well known, see~\cite[Example 1.7.6, p.~116]{Ma} or~\cite[p.~54]{Go}. We have
\[\textrm{dim}\,\la=  \frac{n!}{\prod_{i=1}^{l(\la)} (\la_i+l(\la)-i)!} \prod_{1\le i<j\le l(\la)}(\la_i-\la_j+j-i).\]
We write $\hat{\chi}^\la_\mu=\chi^\la_\mu/\textrm{dim}\,\la$ for the corresponding normalized character.

\subsection{Symmetric functions}

Let $A=\{a_1,a_2,a_3,\ldots\}$ a (possibly infinite) set of
independent indeterminates, called an alphabet. The generating functions
\begin{equation*}
E_z(A)=\prod_{a\in A} (1 +za) =\sum_{k\geq0} z^k\, e_k(A),\qquad
H_z(A)=\prod_{a\in A}  (1-za)^{-1} = \sum_{k\geq0} z^k\, h_k(A)
\end{equation*}
define symmetric functions known as respectively elementary and complete. The power sum symmetric functions
are defined by $p_{k}(A)=\sum_{i \ge 1} a_i^k$. For any partition $\mu$, we define functions $e_\mu$, 
$h_\mu$ or $p_\mu$ by
\[f_{\mu}= \prod_{i=1}^{l(\mu)}f_{\mu_{i}}=\prod_{k\geq1}f_k^{m_{k}(\mu)},\]
where $f_{i}$ stands for $e_i$, $h_i$  or $p_i$. 

When $A$ is infinite, each of the three sets of functions $e_i$, 
$h_i$ or $p_i$ forms an algebraic basis of
$\mathcal{S}$, the symmetric algebra with coefficients in $\mathsf{R}$. Each of the sets of functions $e_\mu$, $h_\mu$, $p_\mu$ is a linear basis of this algebra. 

Another linear basis is formed by the Schur functions $s_{\la}$, which are defined by the Jacobi-Trudi formula
\begin{equation*}
s_\la=\det_{1\le i,j \le l} \,[h_{\la_i-i+j}],
\end{equation*}
with $h_i=0$ for $i<0$. This definition is usually written for a partition $\la$ with length $l$. However it remains valid when $\la$ is replaced by any sequence of integers $\la \in \mathsf{Z}^l$, not necessarily in descending order. Then using the obvious rule
\[s_{\ldots,a,b,\ldots}=-s_{\ldots,b-1,a+1,\ldots}\]
for $b>a$, it is easily seen that $s_{\la}$ is either $0$, either equal to $\pm s_{\mu}$, with $\mu$ a partition.

\subsection{Shifted symmetric functions}

Although the theory of symmetric functions goes back to the early 19th century, shifted symmetric functions are quite recent. They were introduced and studied in~\cite{KO,OO}.

Being given a finite alphabet  $A=\{a_1,a_2,\ldots,a_r\}$, a polynomial in $A$ is ``shifted symmetric'' if it is symmetric in the shifted variables $a_i-i$. When $A=\{a_1,a_2,a_3,\ldots\}$ is infinite, in analogy with symmetric functions, 
a ``shifted symmetric function'' $f$ is a family $\{f_i, i\ge 1\}$ such that  $f_i$ is shifted symmetric in $(a_1,a_2,\ldots,a_i)$, together with the stability property $f_{i+1}(a_1,a_2,\ldots,a_i,0)=f_i(a_1,a_2,\ldots,a_i)$.  

This defines $\mathcal{S}^{\ast}$, the shifted symmetric algebra with coefficients in $\mathsf{R}$, which is algebraically generated by the ``shifted power sums''
\[p_k^{*}(A)=\sum_{i\ge 
1}\Big((a_i-i+1)_k-(-i+1)_k\Big).\]
Here for an indeterminate $z$ and any positive integer $p$, the \textit{lowering} factorial 
\[(z)_p = z(z-1) \ldots (z-p+1)=\sum_{i=1}^p s(p,i) \,z^i,\]
is the generating function of the Stirling numbers of the first kind $s(p,i)$. Conversely 
\[z^p=\sum_{i=1}^p S(p,i)(z)_i\]
is the generating function of the Stirling numbers of the second kind $S(p,i)$.

An element $f\in \mathcal{S}^{\ast}$ may be evaluated at any sequence 
$(a_1,a_2,\ldots)$ with finitely many non zero terms, hence at any partition $\la$. Moreover by analyticity, $f$ is entirely determined by its restriction $f(\la)$ to partitions. This identification is usually performed and $\mathcal{S}^{\ast}$ is considered as a function 
algebra on the set of partitions.

\subsection{Contents}

Given a partition $\la$, the ``content'' of any node $(i,j) \in \la$ is defined as $j-i$. Denote $A_\la=\left\{j-i,\, (i,j) \in \la \right\}$ the finite alphabet of the contents of $\la$. The symmetric algebra $\mathcal{S}[A_\la]$ is generated by the power sums
\[p_k(A_\la) = \sum_{(i,j) \in \la} (j-i)^k=
\sum_{i=1}^{l(\la)} \sum_{j=1}^{\la_i} (j-i)^k.\]

It is well known~\cite{KO,OO} that the quantities $p_k(A_\la)$ are shifted symmetric polynomials of $\la$. Indeed for any integer $k\ge 1$, applying the identity $r(z)_{r-1}=(z+1)_{r}-(z)_{r}$, we have
\begin{equation*}
\begin{split}
p_k(A_\la)&= \sum_{r=1}^k \sum_{(i,j)\in \la} S(k,r)\,
(j-i)_{r}\\
&= \sum_{r=1}^k \frac{S(k,r)}{r+1} \sum_{i= 1}^{l(\la)} 
\Big((\la_i-i+1)_{r+1}-(-i+1)_{r+1}\Big),\\
&=\sum_{r=1}^k  \frac{S(k,r)}{r+1}\, p_{r+1}^{*}(\la).
\end{split}
\end{equation*}
Hence the statement. For instance we have
\[p_1(A_\la)=\frac{1}{2}p_2^{*}(\la),\qquad p_2(A_\la)=
\frac{1}{3}p_3^{*}(\la)+\frac{1}{2}p_2^{*}(\la),\qquad
p_3(A_\la)= \frac{1}{4}p_4^{*}(\la)+p_3^{*}(\la)+\frac{1}{2}p_2^{*}(\la).\]

As a straightforward consequence, the shifted symmetric algebra $\mathcal{S}^{\ast}$ is algebraically generated by the functions $p_k(A_\la), k\ge 1$ together with $p_1^{*}(\la)=|\la|$. The latter corresponds to the cardinal of the alphabet $A_\la$. 

Any shifted symmetric function may be written $f(A_\la)$, with $f \in \mathsf{R}[\textrm{card},p_1,p_2,p_3,\ldots]$. Moreover this expression is unique. From now on we shall abbreviate 
\[p_k(\la):=p_k(A_\la),\qquad p_\mu(\la):=p_\mu(A_\la).\]
This notation will not bring any confusion with the power sum $p_k(\la_1,\la_2,\ldots)=\sum_{i\ge 1} \la_i^k$, which is never used in the sequel. 

\subsection{Murnaghan rule}

The transition matrices between Schur functions and power sums are given by the Frobenius formulas
\begin{equation*}
s_{\la}=\sum_{\mu} z_\mu^{-1}\chi^\la_\mu \, p_\mu, \qquad
p_{\mu}=\sum_{\la}\chi^\la_\mu \, s_\la.
\end{equation*}

Both formulas remain valid when the partition $\la$, with length $l$, is replaced by any sequence of integers $\la \in \mathsf{Z}^l$. This allows to define a generalized (or virtual) character $\chi^\la$ for any sequence of integers $\la \in \mathsf{Z}^l$.

For such $\la$, given some positive integer $r$, we have~\cite[Example 1.3.11, p.~48]{Ma}
\begin{equation*}
p_r \, s_{\la}=\sum_{i=1}^{l+1} s_{\la + r\epsilon_i},
\end{equation*}
with $\epsilon_i$ the sequence having $1$ in the $i$-th place and $0$ elsewhere.

The following recurrence property is a straightforward consequence. Given a partition $\la$ with length $l(\la)$, and two integers $k,p$ with $k\le l(\la)$, let $\la -p\epsilon_k$ be the multi-integer $(\la_1,\ldots,\la_{k-1},\la_k-p,\la_{k+1},\ldots,\la_{l(\la)})\in \mathsf{Z}^{l(\la)}$.

\begin{prop1}
Let $\la,\mu$ be two partitions with weight $n$. Let $p$ be some part of $\mu$, and ${\mu \setminus p}$ the partition obtained by substracting $p$ from $\mu$. Then we have
\begin{equation*}
\chi^\la_\mu=\sum_{k=1}^{l(\la)} \chi^{\la -p\epsilon_k}_{\mu \setminus p}.
\end{equation*}
\end{prop1}

In general the quantities $\chi^{\la -p\epsilon_k}$ appearing on the right-hand side are not characters, but virtual characters. In the usual formulation of the rule~\cite[Example 1.7.5, p.~117]{Ma}, these virtual characters are expressed in terms of characters. This is Nakayama's version~\cite{Na} of Murnaghan's formula~\cite{Mu}. Our method has the advantage of making this operation totally unnecessary.
\newpage
We shall need a formulation of the previous rule in terms of normalized characters. Given a partition $\la$ with weight $n$ and length $l(\la)$, and two integers $k,p$ with $k\le l(\la)$, we write
\begin{equation*}
\begin{split}
d_\la(k,p)&=\frac{n!}{(n-p)!}\,\frac{\textrm{dim}\,(\la -p\epsilon_k)}{\textrm{dim}\,\la}\\
&=\frac{(\la_k+l(\la)-k)!}{(\la_k+l(\la)-k-p)!} 
\prod_{\begin{subarray}{c}i=1\\i\neq k\end{subarray}}^{l(\la)}
\frac{\la_k-\la_i+i-k-p}{\la_k-\la_i+i-k}.
\end{split}
\end{equation*}
The Murnaghan rule then writes
\begin{equation*}
(n)_p\,\hat{\chi}^\la_\mu=\sum_{k=1}^{l(\la)} d_\la(k,p) \, \hat{\chi}^{\la -p\epsilon_k}_{\mu \setminus p}.
\end{equation*}
This recurrence relation is our first ingredient for the computation of $\hat{\chi}^\la_\mu$.

\section{Lagrange interpolation}

Our second ingredient is Lagrange interpolation, written under the following form~\cite{Ls}.
For any two alphabets $A$ and $B$, their difference $A-B$ 
(which is not their difference as sets) is defined by 
\[H_z(A-B) = H_z(A)\,{H_z(B)}^{-1}=\frac {\prod_{b\in B} (1-zb)}{\prod_{a\in A} (1-za)}.\]
Alain Lascoux~\cite{Ls} mentions that when $B$ is empty, 
the following result was already known to Euler.
\begin{prop2}
Let $A$ and $B$ be two finite alphabets with respective 
cardinals $m$ and $n$. For any integer $r\ge0$ we have
\[\sum_{a \in A} a^r \, \frac{\displaystyle\prod_{b \in B}(a-b)}
{\displaystyle\prod_{c \in A, \, c \neq a} (a-c)} =
h_{n-m+r+1}(A-B).\]
\end{prop2}

Given a partition $\la$, let $r$ be the number of nodes in the main diagonal of its Ferrers diagram, and $\alpha_i$ (resp. $\beta_i$) be the number of nodes in the $i$-th row (resp. column) on the right of (resp. below) the node $(i,i)$. The couples $(\alpha_i,\beta_i), i=1\ldots r$ are known as Frobenius coordinates. 

The ``Frobenius function'' is defined by
\begin{equation*}
F(z;\la)=\prod_{i=1}^r \frac{z-\alpha_i}{z+\beta_i+1}
=\prod_{i\ge 1} \frac{z-\la_i+i}{z+i}.
\end{equation*}
The equality goes back to Frobenius~\cite{Fr}, see~\cite[Example 1.1.15, p.~17]{Ma} or~\cite{IO}. 

For any indeterminate $z$ and positive integer $p$, we consider the function
\begin{equation*}
(z)_p \frac{F(z-p;\la)}{F(z;\la)}= (z)_p \prod_{i=1}^{l(\la)}
\frac{z-\la_i+i-p}{z+i-p}\, \frac{z+i}{z-\la_i+i},
\end{equation*}
and its expansion in descending powers of $z$, i.e. its Taylor series at infinity,
\begin{equation*}
(z)_p \frac{F(z-p;\la)}{F(z;\la)}=\sum_{r\ge -p} C_r(\la;p) z^{-r}.
\end{equation*}

\begin{theorem}
For any $r\ge 0$ we have
\begin{equation*}
C_{r+1}(\la;p)=-p\sum_{k=1}^{l(\la)} d_\la(k,p) (\la_k-k)^r.
\end{equation*}
\end{theorem}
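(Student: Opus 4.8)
The plan is to reduce the statement to a single application of the Lagrange lemma, by reading off the coefficients $C_r(\la;p)$ as complete homogeneous symmetric functions $h_k$ of a suitable difference alphabet. Write $l=l(\la)$ and $a_i=\la_i-i$; since $\la$ is a partition the $a_i$ are strictly decreasing, hence distinct, so they form a genuine alphabet. I would set
\[
A=\{a_1,\ldots,a_l\},\quad B=C\cup D,\quad C=\{-l,-l+1,\ldots,p-1-l\},\quad D=\{p+a_1,\ldots,p+a_l\},
\]
so that $|A|=l$ and $|B|=p+l$. The first step is to establish the rational-function identity
\[
(z)_p\,\frac{F(z-p;\la)}{F(z;\la)}=z^p\,H_{1/z}(A-B).
\]
Expanding the definition of $H_z(A-B)=\prod_{b\in B}(1-zb)/\prod_{a\in A}(1-za)$ at $z\mapsto 1/z$ gives $z^p H_{1/z}(A-B)=(z+l)_p\,\prod_i(z-p-a_i)/\prod_i(z-a_i)$, while the product form already displayed for the left-hand side is $(z)_p\prod_i\frac{(z-p-a_i)(z+i)}{(z-p+i)(z-a_i)}$. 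Comparing the two, the identity collapses to the telescoping relation $(z)_p\prod_{i=1}^l\frac{z+i}{z+i-p}=(z+l)_p$, which follows immediately from $\frac{z+i}{z+i-p}=\frac{(z+i)_p}{(z+i-1)_p}$.

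Granting this identity and expanding $H_{1/z}(A-B)=\sum_{k\ge0}z^{-k}h_k(A-B)$, I obtain $z^pH_{1/z}(A-B)=\sum_{k\ge0}h_k(A-B)z^{p-k}$, so that matching the coefficient of $z^{-r}$ yields $C_r(\la;p)=h_{p+r}(A-B)$, and in particular
\[
C_{r+1}(\la;p)=h_{p+r+1}(A-B).
\]
The second step applies the Lagrange lemma with these $A$ and $B$: since $|B|-|A|+r+1=(p+l)-l+r+1=p+r+1$, it gives
\[
h_{p+r+1}(A-B)=\sum_{a\in A}a^r\,\frac{\prod_{b\in B}(a-b)}{\prod_{c\in A,\,c\neq a}(a-c)}.
\]
It then remains to recognize the summand at $a=a_k$ as $-p\,d_\la(k,p)(\la_k-k)^r$. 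Indeed $\prod_{c\neq a_k}(a_k-c)=\prod_{i\neq k}(\la_k-\la_i+i-k)$ is exactly the denominator of $d_\la(k,p)$; the block $\prod_{c\in C}(a_k-c)$ equals $(\la_k+l-k)_p=(\la_k+l-k)!/(\la_k+l-k-p)!$, producing the factorial prefactor; and $\prod_{d\in D}(a_k-d)=\prod_i(\la_k-\la_i+i-k-p)$ contributes the factor $-p$ from its $i=k$ term together with the numerator product of $d_\la(k,p)$. Summing over $k$ gives the claim.

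The main obstacle I anticipate is discovering the correct alphabet $B$: one must recognize that the falling factorial in $d_\la(k,p)$ is encoded by adjoining the block $C$ of $p$ consecutive integers to $D=\{p+\la_i-i\}$, and check that this choice makes the cardinalities produce precisely the index $p+r+1=|B|-|A|+r+1$ required by the Lagrange lemma. Once $B$ is pinned down, both the generating-function identity and the term-by-term simplification are routine algebra; a minor point worth noting is that a possible cancellation between a pole $z=\la_k-k$ and a zero of the numerator corresponds exactly to the vanishing of $d_\la(k,p)$ (through its factorial prefactor or a zero factor $\la_k-\la_i+i-k-p$), so no inconsistency arises.
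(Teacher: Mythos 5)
Your proposal is correct and follows essentially the same route as the paper: the alphabet $B=C\cup D$ you construct is exactly the paper's $B=\{\la_i-i+p,\ i=1,\ldots,l(\la)+p\}$, and both arguments identify $C_{r+1}(\la;p)$ with $h_{p+r+1}(A-B)$ via the identity $(z)_p\,F(z-p;\la)/F(z;\la)=z^pH_{1/z}(A-B)$ before applying the Lagrange lemma and recognizing the summand as $-p\,d_\la(k,p)(\la_k-k)^r$. The only difference is cosmetic (you verify the generating-function identity by a telescoping product rather than by rewriting $H_z(A-B)$ directly), so nothing further is needed.
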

\begin{remark} 
For $r=0$, this result is due to Frobenius~\cite{Fr}, see~\cite[Example 1.7.7, p.~118]{Ma}. The cases $r=1,2$ were investigated by Ingram~\cite{In}.
\end{remark}
\begin{proof}
We apply the Lagrange lemma for the two following alphabets
\begin{equation*}
\begin{split}
A&=\{a_i =\la_i-i,\quad i=1,\ldots,l(\la) \},\\
B&=\{b_i =\la_i-i+p,\quad i=1,\ldots,l(\la)+p\}\end{split}
\end{equation*}
We have
\begin{equation*}
\begin{split}
H_z(A-B)&=\prod_{i=1}^{l(\la)} \frac{1-z(\la_i -i+p)}{1-z(\la_i-i)} \, 
\prod_{i=1}^{p} \big(1+z(l(\la)-p+i)\big)\\
&=\frac{F(1/z-p;\la)}{F(1/z;\la)}\, \prod_{i=1}^{l(\la)} \frac{1+z(i-p)}{1+zi} \, \prod_{i=1}^{p} \big(1+z(l(\la)-p+i)\big)\\
&=\frac{F(1/z-p;\la)}{F(1/z;\la)}\,\prod_{i=1}^{p} \big(1+z(i-p)\big)\\
&=z^p\, (1/z)_p\, \frac{F(1/z-p;\la)}{F(1/z;\la)}.
\end{split}
\end{equation*}
Therefore
\begin{equation*}
(z)_p \frac{F(z-p;\la)}{F(z;\la)}=z^p H_{1/z}(A-B)
=\sum_{k\geq 0} z^{p-k}\, h_k(A-B)
=\sum_{r\geq -p} z^{-r}\, h_{r+p}(A-B).
\end{equation*}
On the other hand, it is obvious that
\[-p\,d_\la(k,p)= \frac {\displaystyle \prod_{b \in B}(a_k-b)}
{\displaystyle \prod_{c \in A,\, c \neq a_k}(a_k-c)},\]
and for any $r\ge 0$ the Lagrange lemma writes
\[-p\sum_{k=1}^{l(\la)} d_\la(k,p) (\la_k-k)^r= h_{r+p+1}(A-B).\]
Hence the statement.
\end{proof}

\section{Explicit series expansion}

Our third ingredient is a formula giving explicitly the Taylor series at infinity of
\[(z)_p \frac{F(z-p;\la)}{F(z;\la)}.\]
Some preliminary results are necessary.

\subsection{Positive integers}

Let $n,p,k$ be three integers with $0 \le p \le n$ and $k \ge 1$. Define
\begin{equation*}
{\binom{n}{p}}_{k} 
= \frac{n}{k} \ \sum_{r \ge 0} 
\binom{p}{r}\binom{n-p}{r} 
\binom{n-r-1}{k-r-1}.
\end{equation*}
We have obviously
\[{\binom{n}{p}}_{k} = 0 \quad \textrm{for} 
\quad k>n, \quad\quad 
\nbi{n}{p}{1}=n,  
\quad\quad  
{\binom{n}{p}}_{k} ={\binom{n}{n-p}}_{k} .\]
These numbers generalize 
the classical binomial coefficients, since we have
\[\nbi{n}{0}{k} = \binom{n}{k}, \quad\quad \nbi{n}{1}{k}=k \binom{n}{k},
\quad\quad \nbi{n}{p}{n} = \binom{n}{p},\]
the last property being a direct consequence of the classical 
Chu-Vandermonde formula.

The numbers $\nbi{n}{p}{k}$ were studied in~\cite{La2}. It was proved that they  are \textit{positive integers}, and their generating function
\[G_n(y,z) =\sum_{p=0}^{n} \sum_{k=1}^{n} \nbi{n}{p}{k} y^p z^k\]
was shown to be~\cite{Z}
\begin{multline*}
G_n(y,z)= 2^{-n} {\left( (1+y)(1+z)+\sqrt{(1+y)^2(1+z)^2-4y(1+z)} 
\right)}^n\\
+ 2^{-n} {\left( (1+y)(1+z)-\sqrt{(1+y)^2(1+z)^2-4y(1+z)} \right)}^n -1-y^n.
\end{multline*}

\subsection{Extension to partitions}

For any integers $0 \le p \le |\la|$ and $k \ge 1$, we define
\[\npbi{\la}{p}{k}= \sum_{(p_i)} \sum_{(k_i)} \prod_{i=1}^{l(\la)} 
\nbi{\la_i}{p_i}{k_i},\]
the sum being taken over all decompositions $p=\sum_{i=1}^{l(\la)} p_i$, 
$k=\sum_{i=1}^{l(\la)} k_i$ with $0 \le p_i \le \la_i$ and 
$k_i \neq 0$ for any $i$. 
Observe that there is no such restriction for $p_i$. 

This definition yields easily
\[\npbi{\la}{p}{k}=0 \quad \textrm{except if} \quad l(\la) \le k \le |\la|.\]
Indeed it is obvious that $\npbi{\la}{p}{k}=0$ for $k<l(\la)$, and since 
$\nbi{n}{p}{k} = 0$ for $k>n$, we have also $\npbi{\la}{p}{k}=0$ for
$k>|\la|$.

For instance $\npbi{\la}{p}{1}=0$ except if $\la$ is a row partition $(n)$, 
in which case $\npbi{(n)}{p}{k}=\nbi{n}{p}{k}$.
We have easily
\[\npbi{\la}{1}{k}=k\npbi{\la}{0}{k},\quad\quad \npbi{\la}{p}{k}=\npbi{\la}{|\la|-p}{k},\quad\quad \npbi{\la}{p}{|\la|}=\binom{|\la|}{p}.\]
As a direct consequence of their definition, 
the generating function for the positive integers $\npbi{\la}{p}{k}$ is the following
\begin{equation*}
\sum_{p=0}^{|\la|} \sum_{k=l(\la)}^{|\la|} 
\npbi{\la}{p}{k} y^p z^k 
=\prod_{i=1}^{l(\la)} G_{\la_i}(y,z)
=\prod_{i \ge 1} {\Big(G_i(y,z) \Big)}^{m_i(\la)}.
\end{equation*}

\subsection{New symmetric functions}

For any integers 
$n \ge 1$, $k \ge 1$ and $0 \le p \le n$, we define the symmetric function
\[F_{npk} =\sum_{|\mu| = n} \frac{\npbi{\mu}{p}{k}}{z_{\mu}}\, p_\mu.\]
Since $\npbi{\mu}{p}{k}=0$ for $k<l(\mu)$, this sum is restricted to partitions 
with $l(\mu) \le k$. Similarly since $\npbi{\mu}{p}{k}=0$ for $k>|\mu|$,
one has $F_{npk}=0$ for $k>n$.

For $k=0$ the previous definition is extended by the convention
$F_{np0}=0$ with the only exception $F_{000}=1$.
For $k=1$ and any $p \le n$ we have $F_{np1}=p_n$. For $k=n$ we obtain
\[F_{npn}= \binom{n}{p} \sum_{|\mu| = n}
\frac{p_\mu}{z_{\mu}}=\binom{n}{p} F_{n0n}=\binom{n}{p} h_n,\]
where we have used~\cite[p.~25]{Ma}. We have also $F_{n1k}= 
kF_{n0k}$ and $F_{npk}=F_{n,n-p,k}$.

From now on we abbreviate $F_{npk}(\la):=F_{npk}(A_\la)$, the value of the symmetric function $F_{npk}$ on the alphabet $A_\la$. As a consequence of Section 2.4, it is a shifted symmetric function of $\la$.

\subsection{Taylor expansion}

For any partition $\la$ the ``content polynomial'' of $\la$ is defined by
\[C_{\la}(z)=\prod_{(i,j) \in \la} \left(z+j-i\right).\]
Since
\[\frac {C_{\la}(z)}{C_{\la}(z-1)}=\prod_{i=1}^{l(\la)}
\prod_{j=1}^{\la_i} \frac{z+j-i}{z+j-i-1}
=\prod_{i=1}^{l(\la)} \frac{z+\la_i-i}{z-i},\]
we have
\[
\frac{F(z-p;\la)}{F(z;\la)}= \frac {C_{\la}(-z+p)}{C_{\la}(-z+p-1)}
\frac {C_{\la}(-z-1)}{C_{\la}(-z)}.\]

Given two indeterminates $x,y$, the expansion of 
\[\frac{C_{\la}(x-y+1)}{C_{\la}(x-y)}\, 
\frac{C_{\la}(-y)}{C_{\la}(-y+1)} =-x
\sum_{r\ge 0} c^\la_r(x)\, y^{-r}\]
in descending powers of $y$ was explicitly obtained in~\cite[Corollary~5.2, p.~3464]{La3}, in the more general context of ``$\alpha$-contents''. The coefficients $c^\la_r(x)$ are given by
\begin{multline*}
c^\la_r(x)=\\
\sum_{\begin{subarray}{c}i,j,m\ge 0 \\ 2m+i+j \le r\end{subarray}} 
(-x)^{m-1} (x+1)^i\binom{m+i+j-1}{i}
\Bigg(\sum_{k=0}^{\mathrm{min}(m,r-2m-i)}
\binom{|\la|+m-1}{m-k} F_{r-2m-i,j,k}(\la) \Bigg).
\end{multline*}

We have $c^\la_0(x)=-1/x$ since $F_{000}(\la)=1$, and $c^\la_1(x)=0$ since $F_{1p0}(\la)=0$. With $\hat x=x+1$, first values are given by
\begin{equation*}
\begin{split}
c^\la_2(x)&=|\la|, \quad\quad c^\la_3(x)=2p_1(\la)+\hat x|\la|,\\
c^\la_4(x)&=3p_2(\la)+3\hat xp_1(\la)-x\binom{|\la|+1}{2}+\hat x^2|\la|,\\
c^\la_5(x)&=4p_3(\la)+6\hat xp_2(\la)-x(|\la|+1)\Big(2p_1(\la)+\hat x|\la|\Big)+4\hat x^2p_1(\la)+\hat x^3|\la|.
\end{split}
\end{equation*}

From Section 2.4 it is obvious that any $c^\la_r(x)$ is a shifted symmetric function of $\la$. These auxiliary functions will be our main tool in the sequel.

\begin{theorem}
We have the following Taylor series at infinity
\[ (z)_p \frac{F(z-p;\la)}{F(z;\la)}= -p\sum_{r,s,t\ge 0} (-1)^s \binom{r+s-1}{s} s(p,t)\, c^\la_r(p)\, z^{t-r-s}.\]
\end{theorem}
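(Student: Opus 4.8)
The plan is to reduce the claimed Taylor series to the expansion of \cite{La3} quoted just above the statement by a single change of variables, and then to absorb the extra factor $(z)_p$ by purely formal power-series bookkeeping. First I would invoke the content-polynomial identity already established, namely
\[
\frac{F(z-p;\la)}{F(z;\la)}= \frac {C_{\la}(-z+p)}{C_{\la}(-z+p-1)}\,\frac {C_{\la}(-z-1)}{C_{\la}(-z)},
\]
and compare its right-hand side with the product $\dfrac{C_\la(x-y+1)}{C_\la(x-y)}\dfrac{C_\la(-y)}{C_\la(-y+1)}$, whose expansion $-x\sum_{r\ge0}c^\la_r(x)\,y^{-r}$ in descending powers of $y$ is given. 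Setting $x=p$ and $y=z+1$ makes the two products coincide factor by factor: the second factors match because $-y=-z-1$, and the first factors match because $x-y+1=p-z$. Hence the \cite{La3} formula specializes to
\[
\frac{F(z-p;\la)}{F(z;\la)}=-p\sum_{r\ge 0}c^\la_r(p)\,(z+1)^{-r}.
\]

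Next I would multiply through by $(z)_p$ and expand the two remaining factors at infinity. The lowering factorial is a finite sum $(z)_p=\sum_{t\ge 0}s(p,t)\,z^t$ by the very definition of the Stirling numbers of the first kind, while the shifted power expands as the negative binomial series
\[
(z+1)^{-r}=z^{-r}(1+1/z)^{-r}=\sum_{s\ge 0}\binom{-r}{s}z^{-r-s}=\sum_{s\ge 0}(-1)^s\binom{r+s-1}{s}z^{-r-s},
\]
where I use the identity $\binom{-r}{s}=(-1)^s\binom{r+s-1}{s}$. Substituting both expansions into $-p\,(z)_p\sum_{r\ge0}c^\la_r(p)(z+1)^{-r}$ and collecting the monomial $z^{t-r-s}$ produces exactly the triple sum in the statement, with the $-p$ prefactor carried along unchanged.

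The only point needing a word of justification is the legitimacy of rearranging the resulting triple series, which I would settle by reading the expansion as a formal Taylor series at infinity: for a fixed exponent $-N$ the constraint $t-r-s=-N$ together with $0\le t\le p$ forces $r+s=t+N\le p+N$, so only finitely many triples $(r,s,t)$ contribute to each coefficient and the interchange of summations is valid. As a consistency check, $c^\la_0(p)=-1/p$ gives leading coefficient $-p\,c^\la_0(p)=1$, matching $F(z-p;\la)/F(z;\la)\to 1$ as $z\to\infty$. I do not anticipate any genuine obstacle: once the substitution $x=p$, $y=z+1$ is spotted, the remainder is routine manipulation of Stirling and binomial coefficients, so the real content of the argument lies entirely in recognizing that change of variables.
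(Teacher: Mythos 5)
Your proof is correct and takes essentially the same route as the paper's: the paper likewise specializes the expansion from \cite{La3} (implicitly via $x=p$, $y=z+1$) to get $F(z-p;\la)/F(z;\la)=-p\sum_{r\ge0}c^\la_r(p)(1+z)^{-r}$, expands $(1+z)^{-r}$ by the negative binomial series, and multiplies by $(z)_p=\sum_t s(p,t)z^t$. You merely spell out the change of variables and the finiteness of each coefficient, which the paper leaves tacit.
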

\begin{proof}
We have
\begin{equation*}
\frac{F(z-p;\la)}{F(z;\la)}\,
=-p\sum_{r\ge 0} c^\la_r(p) (1+z)^{-r}\,
=-p\sum_{r,s\ge 0} (-1)^s \binom{r+s-1}{s}c^\la_r(p)\, z^{-r-s}.
\end{equation*}
\end{proof}

The following result asserts that some \textit{rational} function of $\la$ (defined at the left-hand side) is actually a shifted symmetric \textit{polynomial}. 

\begin{theorem}
For any $r\ge 0$ we have
\begin{equation*}
\sum_{k=1}^{l(\la)} d_\la(k,p) (\la_k-k)^r=(-1)^r
\sum_{i,j\ge 0} (-1)^j \binom{r}{j} s(p+1,i-j)\,c^\la_i(p). 
\end{equation*}
\end{theorem}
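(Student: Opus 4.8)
The plan is to combine Theorems~1 and~2. Theorem~1 (the recurrence for $C_{r+1}(\la;p)$) identifies the desired sum as a single Taylor coefficient, $\sum_{k=1}^{l(\la)}d_\la(k,p)(\la_k-k)^r=-\tfrac{1}{p}\,C_{r+1}(\la;p)$, while Theorem~2 gives the whole expansion of $(z)_p\,F(z-p;\la)/F(z;\la)$ in descending powers of $z$. So I would extract from Theorem~2 the coefficient of $z^{-(r+1)}$. Renaming Theorem~2's first summation index to $i$, its generic exponent is $t-i-s$; setting $t-i-s=-(r+1)$ forces $s=t-i+r+1$, and since $\binom{i+s-1}{s}=\binom{t+r}{t-i+r+1}=\binom{t+r}{i-1}$ this gives
\[
\sum_{k=1}^{l(\la)}d_\la(k,p)(\la_k-k)^r=\sum_{i\ge0}\gamma_i\,c^\la_i(p),\qquad \gamma_i=\sum_{t\ge0}(-1)^{t-i+r+1}\binom{t+r}{i-1}\,s(p,t).
\]
The passage $\binom{t+r}{t-i+r+1}=\binom{t+r}{i-1}$ is safe because both sides vanish precisely when $s=t-i+r+1<0$, so dropping the constraint $s\ge0$ introduces no spurious terms.

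It then suffices to prove the purely numerical identity
\[
\gamma_i=(-1)^r\sum_{j\ge0}(-1)^j\binom{r}{j}\,s(p+1,i-j),
\]
whose right-hand side is exactly the coefficient of $c^\la_i(p)$ in the asserted formula; once the two scalar coefficients agree for every $i$, the two expansions coincide term by term and the theorem follows (no linear independence of the $c^\la_i(p)$ is needed). I would verify this Stirling--binomial identity by comparing ordinary generating functions in a variable $x$. By the Cauchy product, the right-hand side generates $(-1)^r(1-x)^r(x)_{p+1}=(x-1)^r(x)_{p+1}$, using $\sum_m s(p+1,m)x^m=(x)_{p+1}$ and $\sum_j(-1)^j\binom{r}{j}x^j=(1-x)^r$.

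For the left-hand side I would interchange the $i$- and $t$-summations, evaluate the inner sum by the binomial theorem via the shift $i\mapsto i-1$ as $\sum_i(-1)^i\binom{t+r}{i-1}x^i=-x(1-x)^{t+r}$, and then collect the remaining $t$-sum through $\sum_t s(p,t)(x-1)^t=(x-1)_p$. Tracking the signs yields $\sum_i\gamma_i x^i=x(x-1)^r(x-1)_p=(x-1)^r(x)_{p+1}$, where the final step uses $x\,(x-1)_p=(x)_{p+1}$. This matches the generating function of the right-hand side, establishing the numerical identity and hence the theorem. I expect the only real difficulty to be this sign-and-index bookkeeping---the shift $i\mapsto i-1$, the collapse $(-1)^r(1-x)^r=(x-1)^r$, and the conversion between the falling factorials $(x-1)_p$ and $(x)_{p+1}$---rather than anything conceptual.
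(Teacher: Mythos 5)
Your proof is correct and follows the same skeleton as the paper's: Theorem~1 identifies the left-hand side as the coefficient of $z^{-r-1}$ in $(-1/p)(z)_p F(z-p;\la)/F(z;\la)$, Theorem~2 supplies that coefficient (your $\gamma_i$ agrees, after the substitution $t=i+j-r-1$, with the paper's intermediate expression $\sum_{j}(-1)^j\binom{i+j-1}{j}s(p,i+j-r-1)$), and the theorem reduces to a purely numerical Stirling--binomial identity. Where you diverge is in how that identity is established. The paper deduces it from the Chu--Vandermonde identity $\binom{i+j-1}{i-1}=\sum_k\binom{r}{k}\binom{i+j-r-1}{i-k-1}$ combined with the $r=0$ base case $\sum_m\binom{m-1}{k-1}|s(p,m-1)|=|s(p+1,k)|$, which it states without proof; you instead compare ordinary generating functions, showing both sides generate $(x-1)^r(x)_{p+1}$ via $x(x-1)_p=(x)_{p+1}$. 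Your computation checks out (including the boundary case $i=0$, where both sides vanish because $(x)_{p+1}$ has no constant term), and it has the advantage of being fully self-contained: the paper's ``easy'' base case is itself a nontrivial Stirling recurrence, whereas your argument absorbs it into the identity $\sum_t s(p,t)(x-1)^t=(x-1)_p$. Your observation that dropping the constraint $s\ge0$ is harmless because $\binom{t+r}{t-i+r+1}$ and $\binom{t+r}{i-1}$ vanish on exactly the same set is a detail the paper glosses over, and it is handled correctly.
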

\begin{proof}
By Theorem 1 the left-hand side is the coefficient of $z^{-r-1}$ in the Taylor series expansion of $(-1/p)(z)_p F(z-p;\la)/F(z;\la)$. By Theorem 2 this coefficient evaluates as
\[\sum_{i,j\ge 0} (-1)^j \binom{i+j-1}{j} s(p,i+j-r-1) c^\la_i(p)\]
But we have
\[\sum_{j\ge 0} (-1)^{j} \binom{i+j-1}{i-1} s(p,i+j-r-1) =
(-1)^r \sum_{k\ge 0} (-1)^{k} \binom{r}{k} s(p+1,i-k).\]
This is a direct consequence of the Chu-Vandermonde identity
\[\binom{i+j-1}{i-1}=\sum_{k\ge 0}\binom{r}{k}\binom{i+j-r-1}{i-k-1},\]
together with the easy $r=0$ case
\[\sum_{m\ge 0} \binom{m-1}{k-1} |s(p,m-1)| =
|s(p+1,k)|.\]
\end{proof}

\section{One non-unary cycle}
 
Let $|\la|=n$, $\rho$ a partition with $|\rho|\le n$ and  $\mu=(\rho,1^{n-|\rho|})$. A fundamental result~\cite{VK,KO,IO} asserts that the normalized character $(n)_{|\rho|} \hat{\chi}^\la_\mu$ is a shifted symmetric function of $\la$. Our purpose is to express this character in terms of the auxiliary shifted symmetric functions $c^\la_r(p)$ previously introduced.

The proof is done by recurrence over the number of parts of $\mu$ distinct from $1$, i.e. the number of non-unary cycles of permutations having cycle-type $\mu$.

We begin with permutations which are $p$-cycles, i.e. having only one cycle of length $p$ and all other cycles unary. 

\begin{theorem} 
For $\mu=(p,1^{n-p})$ we have
\[(n)_p\, \hat{\chi}^\la_\mu=\sum_{i\ge 2} s(p+1,i)\, c^\la_i(p).\]
\end{theorem}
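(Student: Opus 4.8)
The plan is to combine the normalized Murnaghan rule of Section 2 with Theorem 3, which has already done the hard analytic work. First I would specialize the Murnaghan recurrence to the cycle-type $\mu=(p,1^{n-p})$. Since $p$ is the unique non-unary part, removing it gives $\mu\setminus p=(1^{n-p})$, so the rule reads
\[(n)_p\,\hat{\chi}^\la_\mu=\sum_{k=1}^{l(\la)} d_\la(k,p)\,\hat{\chi}^{\la-p\epsilon_k}_{(1^{n-p})}.\]
The essential feature of this specialization is that every character on the right is evaluated at the \emph{identity} conjugacy class of $S_{n-p}$.

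The next step is to observe that the normalized character at the identity class is identically $1$. For a genuine partition $\nu$ of $n-p$ one has $\chi^\nu_{(1^{n-p})}=\textrm{dim}\,\nu$, hence $\hat{\chi}^\nu_{(1^{n-p})}=1$; this persists for the virtual summands $\chi^{\la-p\epsilon_k}$, since the value of any (virtual) character at the identity is its signed dimension. To avoid the spurious $0/0$ occurring when $s_{\la-p\epsilon_k}$ vanishes, I would keep each product intact rather than cancelling prematurely: by the very definition of $d_\la(k,p)$,
\[d_\la(k,p)\,\hat{\chi}^{\la-p\epsilon_k}_{(1^{n-p})}=\frac{n!}{(n-p)!}\,\frac{\textrm{dim}\,(\la-p\epsilon_k)}{\textrm{dim}\,\la}\cdot\frac{\chi^{\la-p\epsilon_k}_{(1^{n-p})}}{\textrm{dim}\,(\la-p\epsilon_k)}=\frac{n!}{(n-p)!}\,\frac{\textrm{dim}\,(\la-p\epsilon_k)}{\textrm{dim}\,\la}=d_\la(k,p),\]
using $\chi^{\la-p\epsilon_k}_{(1^{n-p})}=\textrm{dim}\,(\la-p\epsilon_k)$. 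Thus the recurrence collapses to the purely dimensional sum
\[(n)_p\,\hat{\chi}^\la_\mu=\sum_{k=1}^{l(\la)} d_\la(k,p).\]

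Finally I would recognize this right-hand side as the $r=0$ instance of Theorem 3. Setting $r=0$ forces $j=0$ through $\binom{0}{j}=\delta_{j,0}$, so Theorem 3 gives
\[\sum_{k=1}^{l(\la)} d_\la(k,p)=\sum_{i\ge 0} s(p+1,i)\,c^\la_i(p).\]
It then remains only to discard the terms $i=0$ and $i=1$: the first vanishes because $s(p+1,0)=0$ for $p\ge 0$, and the second vanishes because $c^\la_1(x)=0$ (noted just after the definition of the functions $c^\la_r$). This leaves exactly $\sum_{i\ge 2} s(p+1,i)\,c^\la_i(p)$, as claimed.

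The only genuinely delicate point is the reduction in the second paragraph, namely justifying that the normalized characters at the identity class equal $1$ for the virtual summands and handling the vanishing-dimension terms without ever dividing by zero; once that collapse is secured, the remainder is a direct appeal to Theorem 3 together with two elementary vanishing facts about $s(p+1,0)$ and $c^\la_1$.
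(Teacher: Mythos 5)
Your proposal is correct and follows exactly the paper's route: the paper's proof likewise reduces the Murnaghan rule to $(n)_p\,\hat{\chi}^\la_\mu=\sum_{k=1}^{l(\la)} d_\la(k,p)$ and then invokes Theorem 3 with $r=0$. You have merely filled in the details the paper leaves implicit (the collapse $\hat{\chi}^{\la-p\epsilon_k}_{(1^{n-p})}=1$ handled without dividing by a possibly vanishing dimension, and the vanishing of the $i=0,1$ terms via $s(p+1,0)=0$ and $c^\la_1=0$), all of which are accurate.
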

\begin{proof}
By the Murnaghan rule, we have
\begin{equation*}
(n)_p\, \hat{\chi}^\la_\mu=\sum_{k=1}^{l(\la)} d_\la(k,p).
\end{equation*}
We apply Theorem $3$ with $r=0$.
\end{proof}

For $p=1$ the sum is restricted to $i=2$ and we recover $\hat{\chi}^\la_{1^n}=1$. Frobenius~\cite{Fr} computed the three cases $2\le p\le 4$, Suzuki~\cite{Su} the cases $p=2, 3$ and Ingram~\cite{In} the case $p=5$. Their results may be recovered as follows :
\begin{equation*}
\begin{split}
(n)_2\, \hat{\chi}^\la_{2,1^{n-2}}&=
c_3^\la(2) - 3 c_2^\la(2)=2p_1(\la)\\
(n)_3\, \hat{\chi}^\la_{3,1^{n-3}}&=
c_4^\la(3) - 6 c_3^\la(3)+11 c_2^\la(3)=3p_2(\la)-3\binom{n}{2}\\
(n)_4\, \hat{\chi}^\la_{4,1^{n-4}}&=
c_5^\la(4) - 10 c_4^\la(4)+35 c_3^\la(4)-50c_2^\la(4)=4p_3(\la)-4(2n-3)p_1(\la)\\
(n)_5\, \hat{\chi}^\la_{5,1^{n-5}}&=c_6^\la(5)-15c_5^\la(5)+85 c_4^\la(5)-225c_3^\la(5)+274c_2^\la(5)\\&
=5p_4(\la)-5(3n-10)p_2(\la)-10p_1^2(\la)+25\binom{n}{3}-15\binom{n}{2}.
\end{split}
\end{equation*}

\begin{remark}
Since the left-hand side is a shifted symmetric \textit{polynomial} of $\la$, Theorem $4$ keeps true for virtual characters, i.e. when $\la$ is replaced by \textit{any sequence of integers}, not necessarily in descending order. This extension is obtained by analytic continuation.
\end{remark}

\section{Two non-unary cycles}

The second step of our recurrence corresponds to a permutation having only two non-unary cycles with lengths $p\ge q$, whose cycle-type is the partition $\mu=(p,q,1^{n-p-q})$. We shall need two auxiliary lemmas.
\begin{lem}
Given four positive integers $i,k,p,q$ with $k\le l(\la)$, we have
\[c^{\la -p\epsilon_k}_i(q)=c^{\la}_i(q)+
pq\sum_{\begin{subarray}{c}r,s,t\ge 0 \\ r+s+t+2=i\end{subarray}}
c^{\la}_t(q) (\la_k-k+q+1)^r (\la_k-k-p+1)^s.\]
\end{lem}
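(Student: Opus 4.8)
The plan is to express both sides in terms of the content polynomial $C_\la(z)$ and to understand directly how removing $p$ boxes from row $k$ changes the relevant generating function. Recall from Section~4.4 that the auxiliary functions $c^\la_i(q)$ arise as the Taylor coefficients of
\[
\frac{C_{\la}(x-y+1)}{C_{\la}(x-y)}\,\frac{C_{\la}(-y)}{C_{\la}(-y+1)}=-x\sum_{r\ge 0} c^\la_r(x)\, y^{-r},
\]
specialized at $x=q$. So the natural first step is to compute the analogous generating function for the multi-integer $\la-p\epsilon_k$ and compare it with that of $\la$. Since passing from $\la$ to $\la-p\epsilon_k$ only alters row $k$, changing $\la_k$ into $\la_k-p$, the content polynomial changes by a single localized factor: writing $a=\la_k-k$ for the content-shift of that row, the ratio $C_{\la-p\epsilon_k}(z)/C_\la(z)$ is an explicit rational function supported near $z=-a$.

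Concretely, I would track the effect of the substitution $\la_k\mapsto \la_k-p$ on each of the four content-polynomial factors appearing above, at $x=q$. The key algebraic identity to extract is that the \emph{difference} of the two generating functions factors as $pq$ times the original generating function (which supplies the $c^\la_t(q)$) multiplied by two geometric-type series in $y^{-1}$ whose ratios are governed by the numbers $\la_k-k+q+1$ and $\la_k-k-p+1$. These two quantities are exactly the values of the shifted content of row $k$ as seen from the two ``sides'' $x=q$ and the removed block of size $p$: the factor $\frac{C_\la(x-y+1)}{C_\la(x-y)}$ contributes the argument $\la_k-k+q+1$ while the removal factor contributes $\la_k-k-p+1$. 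Matching Taylor coefficients in $y^{-1}$ on both sides, the convolution of these two geometric series with the series $\sum_t c^\la_t(q) y^{-t}$ produces exactly the triple sum over $r+s+t+2=i$ in the statement, the shift by $2$ coming from the two extra factors of $y^{-1}$ introduced by the geometric series.

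The main obstacle I expect is bookkeeping the exact powers of $y$ and the sign/normalization so that the leading $-x$ prefactor and the shift ``$+2$'' in the index relation $r+s+t+2=i$ come out correctly; in particular one must verify that the localized change in $C_\la$ does not disturb the other factors beyond producing precisely the two linear terms $\la_k-k+q+1$ and $\la_k-k-p+1$, and that no extraneous contribution survives. A clean way to control this is to write $C_{\la-p\epsilon_k}(z)=C_\la(z)\,R(z)$ with $R(z)$ an explicit ratio of linear factors in $z$ depending only on $a=\la_k-k$, $p$, and $q$, then expand $R$ and its shifts to first order in the relevant variable; the coefficient $pq$ emerges naturally from the product of the two first-order terms. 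Once the generating-function identity is established, extracting the coefficient of $y^{-i}$ and dividing by $-q$ (to pass from the generating function to $c^{\la-p\epsilon_k}_i(q)$) yields the stated formula.
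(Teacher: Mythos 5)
Your plan is essentially the paper's own proof: the paper phrases it through the Frobenius function $F(z;\la)$ rather than the content polynomial, but via the identity $\frac{F(z-q;\la)}{F(z;\la)}=\frac{C_\la(-z+q)}{C_\la(-z+q-1)}\frac{C_\la(-z-1)}{C_\la(-z)}$ these are the same generating function, and all the key steps you describe — localization of the ratio to row $k$, the exact evaluation $1+\frac{pq}{(z-\la_k+k-q-1)(z-\la_k+k+p-1)}$, the double geometric expansion contributing the ratios $\la_k-k+q+1$ and $\la_k-k-p+1$ with the shift by $2$, and the convolution with $\sum_t c^\la_t(q)$ — coincide with the paper's. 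The one point you should add explicitly is the paper's opening remark that, both sides being shifted symmetric polynomials of $\la$, one may assume by analytic continuation that $\la-p\epsilon_k$ is itself a partition, so that its content polynomial (or Frobenius function) is actually defined.
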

\begin{proof}
By analytic continuation, both sides being shifted symmetric polynomials of $\la$, we may assume that $\la -p\epsilon_k$ is a partition. By definition we have
\begin{equation*}
\begin{split}
\frac{F(z-q-1;\la -p\epsilon_k)}{F(z-1;\la -p\epsilon_k)}
\frac{F(z-1;\la)}{F(z-q-1;\la)}
&=\frac{z-\la_k+k+p-q-1}{z-\la_k+k-q-1}\, 
\frac{z-\la_k+k-1}{z-\la_k+k+p-1}\\
&=1+\frac{pq}{(z-\la_k+k-q-1)(z-\la_k+k+p-1)}\\
&=1+\frac{pq}{z^2}\sum_{r,s\ge 0}(\la_k-k+q+1)^r(\la_k-k-p+1)^s z^{-r-s}.
\end{split}
\end{equation*}
Expanding series and identifying coefficients, we can conclude.
\end{proof}

\begin{lem}
Given two indeterminates $x,y$  and two positive integers $a,b$, we have
\[\sum_{\begin{subarray}{c}r,s,u,v\ge 0\\ r+s=a\end{subarray}}
(-1)^{u+v}
\binom{r}{u}\binom{s}{v}\binom{u+v}{a-b}(x+1)^{r-u} (y+1)^{s-v}=
(-1)^{a+b}\binom{a+1}{b+1}\frac{x^{b+1}-y^{b+1}}{x-y}.\]
\end{lem}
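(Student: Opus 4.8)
The plan is to treat this as a polynomial identity in $x$ and $y$ and to decouple the two variables by representing the coupling binomial coefficient as a coefficient extraction. First I would write $\binom{u+v}{a-b}=[t^{a-b}]\,(1+t)^{u+v}$, the coefficient of $t^{a-b}$ in $(1+t)^{u+v}$, and pull this extraction outside the whole sum. The left-hand side then becomes
\[
[t^{a-b}]\sum_{r+s=a}\ \sum_{u,v\ge0}(-1)^{u+v}\binom{r}{u}\binom{s}{v}(1+t)^{u+v}(x+1)^{r-u}(y+1)^{s-v}.
\]

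Next I would observe that, once this is done, the $u$-sum and the $v$-sum factor completely and each collapses by the binomial theorem. Writing $(-1)^{u+v}(1+t)^{u+v}=\bigl(-(1+t)\bigr)^{u}\bigl(-(1+t)\bigr)^{v}$, the sum over $u$ is $\sum_u\binom{r}{u}\bigl(-(1+t)\bigr)^u(x+1)^{r-u}=\bigl(x+1-(1+t)\bigr)^r=(x-t)^r$, and likewise the $v$-sum gives $(y-t)^s$. Hence the inner double sum reduces to $\sum_{r+s=a}(x-t)^r(y-t)^s$, a finite geometric-type sum evaluating to $\frac{(x-t)^{a+1}-(y-t)^{a+1}}{x-y}$.

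It then remains to extract the coefficient of $t^{a-b}$. From $(x-t)^{a+1}=\sum_j\binom{a+1}{j}(-1)^jx^{a+1-j}t^j$, the coefficient of $t^{a-b}$ is $(-1)^{a-b}\binom{a+1}{a-b}x^{b+1}$, and symmetrically for $y$. Using $\binom{a+1}{a-b}=\binom{a+1}{b+1}$ and $(-1)^{a-b}=(-1)^{a+b}$ (the exponents differ by $2b$) then yields exactly the right-hand side.

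I expect no serious obstacle here: the only genuine idea is the coefficient-extraction trick that decouples $x$ from $y$, after which everything is a sequence of routine binomial-theorem collapses followed by a single coefficient read-off. Care is needed only to keep the summation ranges consistent (the constraints $u\le r$ and $v\le s$ are automatic from the vanishing of the binomial coefficients) and to verify the sign bookkeeping in the final step.
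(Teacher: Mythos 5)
Your proof is correct. It is, at bottom, the generating-function form of the paper's argument: the paper decouples $u$ from $v$ by expanding $\binom{u+v}{a-b}=\sum_{k+l=a-b}\binom{u}{k}\binom{v}{l}$ (Chu--Vandermonde), then collapses each single-variable sum with the auxiliary identity $\sum_{u}\binom{r}{u}\binom{u}{k}x^{r-u}=\binom{r}{k}(1+x)^{r-k}$ (applied with a sign twist), and finally invokes the same geometric-type evaluation $\sum_{r+s=a}X^rY^s=(X^{a+1}-Y^{a+1})/(X-Y)$ that you use. Your substitution $\binom{u+v}{a-b}=[t^{a-b}](1+t)^{u+v}$ encodes exactly that Chu--Vandermonde split, but it buys you a genuinely smoother execution: the inner sums collapse by the plain binomial theorem to $(x-t)^r(y-t)^s$, so you never need the subset-of-a-subset identity or its sign bookkeeping, and all signs are absorbed into the single coefficient read-off $[t^{a-b}](x-t)^{a+1}=(-1)^{a-b}\binom{a+1}{b+1}x^{b+1}$ at the end. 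The degenerate case $b>a$ is also handled uniformly (both sides vanish). This is a clean, complete write-up of what the paper only sketches.
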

\begin{proof}A direct consequence of the Chu-Vandermonde identity together with the two elementary properties
\begin{equation*}
\begin{split}
\sum_{u\ge 0}\binom{r}{u}\binom{u}{k}x^{r-u}&=\binom{r}{k}(1+x)^{r-k},\\
\sum_{\begin{subarray}{c}r+s=a\\ k+l=a-b\end{subarray}}
\binom{r}{k}\binom{s}{l}x^{r-k}y^{s-l}&=
\binom{a+1}{b+1}\frac{x^{b+1}-y^{b+1}}{x-y}.
\end{split}
\end{equation*}
\end{proof}
\begin{theorem} For $\mu=(p,q,1^{n-p-q})$ we have
\begin{multline*}
(n)_{p+q}\, \hat{\chi}^\la_\mu=\sum_{i,j \ge 2} c^\la_i(p)\, c^\la_j(q)\, s(p+1,i)\,s(q+1,j)\\
+pq\,\sum_{i,j \ge 0} c^\la_i(p)\, c^\la_j(q)
\Bigg(\sum_{a,b \ge 0} \binom{a+1}{b+1} 
\frac{p(-p)^{b}+q^{b+1}}{p+q} s(p+1,i-a+b)\,s(q+1,j+a+2)\Bigg).
\end{multline*}
\end{theorem}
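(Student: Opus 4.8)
\section*{Proof proposal}

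The plan is to strip off the $p$-cycle with the Murnaghan rule, reduce the remaining factor to the one-cycle case of Theorem 4, and then re-expand everything in the functions $c^\la_r(\cdot)$ with the help of Lemmas 1 and 2. Applying the normalized Murnaghan rule with the part $p$, and noting that $\mu\setminus p=(q,1^{n-p-q})$ carries a single non-unary cycle, gives
\[(n)_p\,\hat{\chi}^\la_\mu=\sum_{k=1}^{l(\la)}d_\la(k,p)\,\hat{\chi}^{\la-p\epsilon_k}_{(q,1^{n-p-q})}.\]
The sequence $\la-p\epsilon_k$ has weight $n-p$, so by Theorem 4 --- which by the Remark following it remains valid for the virtual characters arising when $\la-p\epsilon_k$ is not a partition --- one has $(n-p)_q\,\hat{\chi}^{\la-p\epsilon_k}_{(q,1^{n-p-q})}=\sum_{j\ge2}s(q+1,j)\,c^{\la-p\epsilon_k}_j(q)$. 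Since $(n)_p(n-p)_q=(n)_{p+q}$, the left-hand side of the statement becomes $\sum_k d_\la(k,p)\sum_{j\ge2}s(q+1,j)\,c^{\la-p\epsilon_k}_j(q)$.

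Next I would invoke Lemma 1 to rewrite each $c^{\la-p\epsilon_k}_j(q)$ in terms of the functions $c^\la_t(q)$ attached to the original $\la$. This splits the double sum into two pieces corresponding to the two terms of Lemma 1. In the first piece the relevant factor is the constant term $c^\la_j(q)$, independent of $k$, so the $k$-summation factors out as $\sum_k d_\la(k,p)$, which by the $r=0$ case of Theorem 3 equals $\sum_{i\ge2}s(p+1,i)\,c^\la_i(p)$ (this is precisely the content of Theorem 4). This reproduces exactly the first double sum $\sum_{i,j\ge2}c^\la_i(p)\,c^\la_j(q)\,s(p+1,i)\,s(q+1,j)$ of the statement.

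The second piece of Lemma 1 reduces everything to evaluating, for each $r,s$, the weighted power sum $\sum_k d_\la(k,p)\,(\la_k-k+q+1)^r(\la_k-k-p+1)^s$. Here I would expand both factors by the binomial theorem into monomials $(\la_k-k)^{u+v}$ and apply Theorem 3 to each $\sum_k d_\la(k,p)(\la_k-k)^{u+v}$, which introduces a Stirling index $c$ and a factor $c^\la_i(p)$ with $s(p+1,i-c)$. Grouping the remaining $r,s,u,v$ summations by $a=r+s$, the inner block is precisely the left-hand side of Lemma 2 with $x=q$ and $y=-p$; evaluating it collapses the block to $\binom{a+1}{b+1}\,(q^{b+1}-(-p)^{b+1})/(p+q)$ with $b=a-c$. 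The elementary identity $-(-p)^{b+1}=p(-p)^b$ rewrites the numerator as $p(-p)^b+q^{b+1}$, and tracking indices pairs $s(p+1,i-a+b)$ with $c^\la_i(p)$ and $s(q+1,j+a+2)$ with $c^\la_j(q)$, which is the claimed second sum.

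The main obstacle will be the index bookkeeping in this last step: identifying which summation variable plays the role of $a-b$ in Lemma 2, and checking that the signs produced by Theorem 3 and by Lemma 2 align so as to cancel (the net sign is $(-1)^{2c}=1$). One should also observe that, whereas the first sum runs over $i,j\ge2$, the correction term restores the full range $i,j\ge0$, so that the contributions of $c^\la_0$ and $c^\la_1$ enter only through this second piece. Once the applicability of Lemma 2 is recognized the remaining manipulations are routine, and adding the two pieces yields the stated identity.
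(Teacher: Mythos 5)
Your proposal follows the paper's own proof essentially step for step: Murnaghan rule stripping off the part $p$, Theorem 4 extended to virtual characters by analytic continuation, Lemma 1 to return to the functions $c^\la_t(q)$, Theorem 3 with $r=0$ for the first piece, and binomial expansion plus Theorem 3 plus Lemma 2 (with $x=q$, $y=-p$) for the correction term. The index and sign bookkeeping you describe, including the cancellation $(-1)^{a-b}(-1)^{a+b}=1$ and the rewriting $q^{b+1}-(-p)^{b+1}=p(-p)^b+q^{b+1}$, matches the paper's computation, so the proposal is correct and not a genuinely different route.
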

\begin{proof}
Both sums are obviously finite. The second is restricted to $i+j\le p+q$ and $b\le a\le q-1$. The Murnaghan rule writes
\begin{equation*}
(n)_{p+q}\, \hat{\chi}^\la_{p,q,1^{n-p-q}}=\sum_{k=1}^{l(\la)} d_\la(k,p)\, (n-p)_{q}\,\hat{\chi}^{\la -p\epsilon_k}_{q,1^{n-p-q}}.
\end{equation*}
In general $\la -p\epsilon_k$ is not a partition. However taking Remark 2 into account, Theorem $4$ yields
\begin{equation*}
(n)_{p+q}\, \hat{\chi}^\la_{p,q,1^{n-p-q}}= \sum_{k=1}^{l(\la)} d_\la(k,p)\, \sum_{i\ge 2}  s(q+1,i)\, c^{\la -p\epsilon_k}_i(q).
\end{equation*}
By Lemma $1$ we have
\begin{multline*}
(n)_{p+q}\, \hat{\chi}^\la_{p,q,1^{n-p-q}}= \sum_{i\ge 2}  
s(q+1,i)\, c^{\la}_i(q)
\sum_{k=1}^{l(\la)} d_\la(k,p) \\ + pq
\sum_{k=1}^{l(\la)} d_\la(k,p)\, \sum_{i\ge 2}  s(q+1,i)\, \sum_{\begin{subarray}{c}r,s,t\ge 0 \\ r+s+t+2=i\end{subarray}}
c^{\la}_t(q) (\la_k-k+q+1)^r (\la_k-k-p+1)^s.
\end{multline*}
Applying Theorem $3$ with $r=0$, the first term on the right-hand side is clearly
\[\sum_{i,j \ge 2} c^\la_i(p)\, c^\la_j(q)\, s(p+1,i)\,s(q+1,j).\]
The second term can be written
\begin{equation*}
pq \sum_{r,s,t,u,v\ge 0} s(q+1,r+s+t+2)\, c^{\la}_t(q) \binom{r}{u}\binom{s}{v}(1+q)^{r-u} (1-p)^{s-v} \sum_{k=1}^{l(\la)} d_\la(k,p)\,(\la_k-k)^{u+v}.
\end{equation*}
Applying Theorem $3$ we obtain
\begin{multline*}
pq \sum_{j,r,s,u,v\ge 0} s(q+1,r+s+j+2)\, c^{\la}_j(q) \binom{r}{u}\binom{s}{v}(1+q)^{r-u} (1-p)^{s-v}\\ 
\times (-1)^{u+v}
\sum_{i,k\ge 0} (-1)^k \binom{u+v}{k} s(p+1,i-k)\,c^\la_i(p).
\end{multline*}
We conclude by using Lemma $2$.
\end{proof}

For $q=1$ the sums are restricted to $j=2$ (resp. $j=a=b=0$), and we recover Theorem $4$. Suzuki~\cite{Su} computed the case $p=q=2$, and Ingram~\cite{In} the four cases $2\le p\le 4, q=2$ and $p=q=3$ (with many misprints). 

We give the examples
\begin{equation*}
\begin{split}
(n)_6\, \hat{\chi}^\la_{3,3,1^{n-6}}=&
\left(c_4^\la(3)-6 c_3^\la(3)+ 11 c_2^\la(3)\right) \left(c_4^\la(3)-6 c_3^\la(3)+ 20 c_2^\la(3)\right)\\
& -9c_6^\la(3)+90c_5^\la(3)-375c_4^\la(3)+810c_3^\la(3)-876c_2^\la(3)
\end{split}
\end{equation*}
\begin{equation*}
\begin{split}
(n)_7\, \hat{\chi}^\la_{4,3,1^{n-7}}=&
\left(c_5^\la(4)-10 c_4^\la(4)+35 c_3^\la(4)-50 c_2^\la(4) \right) 
\left(c_4^\la(3)-6c_3^\la(3)+ 23c_2(3)\right)\\ &-12\left(c_7^\la(4) -15 c_6^\la(4) +95 c_5^\la(4)- 325 c_4^\la(4)  + 624 c_3^\la(4)- 620 c_2^\la(4)\right).
\end{split}
\end{equation*}

\begin{remark} 
The expression given by Theorem $5$ is not symmetrical with respect to $(p,q)$, though actually $\hat{\chi}^\la_\mu$ is. Thus the equivalence of formulas written for $(p,q)$ and for $(q,p)$ yields identities between the $c_j^\la$ 's.
The simplest case of such an identity is obtained for $\mu=(2,1^{n-2})$. Then writing Theorem $5$ for $(2,1)$ and for $(1,2)$, we have
\begin{equation*}
(n)_3\, \hat{\chi}^\la_{2,1^{n-2}}=(c_3^\la(2) - 3 c_2^\la(2))(c_2^\la(1)- 2) = - 2c_3^\la(1) + c_2^\la(1)(c_3^\la(2) -3c_2^\la(2)+ 4),
\end{equation*}
which gives 
\[3 c_2^\la(2)-2 c_2^\la(1) = c_3^\la(2)-c_3^\la(1)=n.\]
\end{remark}

\section{The general case}

Theorem $5$ may be written in a more compact form, by using the following notations. Let $\varepsilon \in \{0,2\}$. Define $\theta=1$ if $\varepsilon=0$ and $\theta=pq$ otherwise. Then Theorem $5$ reads
\begin{equation*}
(n)_{p+q}\, \hat{\chi}^\la_{p,q,1^{n-p-q}}=
\sum_{\varepsilon\in \{0,2\}}\, \sum_{i,j \ge 0} \,
A_{ij}^{(\varepsilon)}(p,q) \,c^\la_i(p)\, c^\la_j(q), 
\end{equation*}
with
\begin{equation*}
A_{ij}^{(\varepsilon)}(p,q)=
\sum_{a,b \ge 0}  \theta \, \binom{a+1}{b+1} 
\frac{p(-p)^{b}+q^{b+1}}{p+q}
s(p+1,i-a+b)\,s(q+1,j+a+\varepsilon),
\end{equation*}
and the convention that the sum on $a,b$ is restricted to $a=b=0$ when $\varepsilon=0$. 

A similar notation will be useful in the general case. Let $\rho=(\rho_1,\ldots,\rho_r)$ be a partition with weight $|\rho|\le n$. Let $\mathsf{M}^{(r)}$ denote the set of upper triangular $r \times r$ matrices with nonnegative integers, and $0$ on the 
diagonal. For any $1\le i<j \le r$ let $\varepsilon_{ij} \in \{0,2\}$, and define $\theta_{ij}=1$ if $\varepsilon_{ij}=0$ and $\theta_{ij}=\rho_i\rho_j$ otherwise. 

\begin{theorem} 
For $\mu=(\rho_1,\ldots,\rho_r,1^{n-|\rho|})$ we have
\begin{equation*}
(n)_{|\rho|}\, \hat{\chi}^\la_\mu=
\sum_{\varepsilon \in \{0,2\}^{r(r-1)/2}} \,
\sum_{(i_1,\ldots,i_r)\in \mathsf{N}^r}\,
 A_{i_1,\ldots,i_r}^{(\varepsilon)}(\rho_1,\ldots,\rho_r) \prod_{k=1}^r c^\la_{i_k}(\rho_k), 
\end{equation*}
with 
\begin{multline*}
A_{i_1,\ldots,i_r}^{(\varepsilon)}(\rho_1,\ldots,\rho_r)=
\sum_{a,b \in \mathsf{M}^{(r)}}\,\Bigg(\prod_{1\le i<j \le r} \theta_{ij} \binom{a_{ij}+1}{b_{ij}+1}\,
\frac{\rho_i(-\rho_i)^{b_{ij}}+{\rho_j}^{b_{ij}+1}}{\rho_i+\rho_j}\Bigg)\\\times
\prod_{k=1}^r s\Big(\rho_k+1,i_k+\sum_{l<k}(a_{lk}+\varepsilon_{lk})-\sum_{l>k} (a_{kl}-b_{kl})\Big),
\end{multline*}
and the convention that the sum on $a_{ij},b_{ij}$ is restricted to $a_{ij}=b_{ij}=0$ when $\varepsilon_{ij}=0$. 
\end{theorem}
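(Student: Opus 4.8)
The plan is to induct on $r$, the number of non-unary parts of $\mu$. The base case $r=1$ is exactly Theorem~4, and the case $r=2$ (Theorem~5) already displays the shape of the general coefficient $A^{(\varepsilon)}$, so it serves as the template for a single inductive step. For the step from $r-1$ to $r$, I would peel off the first part $\rho_1$ by the Murnaghan rule in its normalized form,
\begin{equation*}
(n)_{|\rho|}\,\hat{\chi}^\la_\mu=\sum_{k=1}^{l(\la)} d_\la(k,\rho_1)\,(n-\rho_1)_{|\rho|-\rho_1}\,\hat{\chi}^{\la-\rho_1\epsilon_k}_{\rho_2,\ldots,\rho_r,1^{n-|\rho|}},
\end{equation*}
and then apply the induction hypothesis to the inner character, which has only $r-1$ non-unary parts but is evaluated at the shifted sequence $\la-\rho_1\epsilon_k$. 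Since $\la-\rho_1\epsilon_k$ is generally not a partition, I would use the induction hypothesis in its virtual-character form, justified by analytic continuation exactly as in Remark~2. This produces a sum of products $\prod_{j=2}^r c^{\la-\rho_1\epsilon_k}_{i_j}(\rho_j)$ weighted by coefficients $A^{(\varepsilon')}$ that already carry the matrix entries $a_{ij},b_{ij}$ and the shifts $\varepsilon_{ij}$ for all pairs $2\le i<j\le r$.

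Next I would expand every factor $c^{\la-\rho_1\epsilon_k}_{i_j}(\rho_j)$ by Lemma~1. Each factor splits into the diagonal term $c^\la_{i_j}(\rho_j)$ and a cross term carrying $\rho_1\rho_j\,(\la_k-k+\rho_j+1)^{r_j}(\la_k-k-\rho_1+1)^{s_j}$; the binary choice between the two is precisely the choice $\varepsilon_{1j}\in\{0,2\}$, with $a_{1j}=b_{1j}=0$ in the diagonal case, and the $+2$ from the constraint $r_j+s_j+t_j+2=i_j$ supplying the shift $\varepsilon_{1j}$ in the Stirling argument attached to $\rho_j$. After binomial expansion the product over $j=2,\ldots,r$ collapses to a single monomial $(\la_k-k)^{R}$ with $R=\sum_j(u_j+v_j)$, which I would feed, together with the weight $d_\la(k,\rho_1)$, into Theorem~3. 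This is the step that eliminates the sum over $k$, turning it into a combination of the functions $c^\la_i(\rho_1)$ and attaching the single remaining Stirling factor $s(\rho_1+1,\cdot)$ that carries the index $i_1$.

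The one genuinely new wrinkle beyond the two-cycle case is that Theorem~3 produces a \emph{single} binomial $\binom{R}{m}$ tied to the \emph{total} power $R$, whereas the target coefficient $A^{(\varepsilon)}$ is a \emph{product} of pair contributions over $j$. The remedy is to split this binomial by Chu--Vandermonde, $\binom{R}{m}=\sum_{\sum_j m_j=m}\prod_j\binom{u_j+v_j}{m_j}$, and then to apply Lemma~2 once for each index $j$ with $a=r_j+s_j$ and $a-b=m_j$. This recombines $\binom{r_j}{u_j}\binom{s_j}{v_j}\binom{u_j+v_j}{m_j}$ together with $(\rho_j+1)^{r_j-u_j}(1-\rho_1)^{s_j-v_j}$ into the closed factor $\binom{a_{1j}+1}{b_{1j}+1}\,(\rho_1(-\rho_1)^{b_{1j}}+\rho_j^{\,b_{1j}+1})/(\rho_1+\rho_j)$, exactly as in the proof of Theorem~5. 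Setting $m_j=a_{1j}-b_{1j}$, the total shift $-m=-\sum_{l>1}(a_{1l}-b_{1l})$ in the argument of $s(\rho_1+1,\cdot)$ matches the prescribed $i_1-\sum_{l>1}(a_{1l}-b_{1l})$, while each inner factor for $j\ge 2$ simultaneously acquires the new contribution $+(a_{1j}+\varepsilon_{1j})$, upgrading its inherited shift $\sum_{2\le l<j}$ to $\sum_{1\le l<j}=\sum_{l<j}$.

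The main obstacle is purely the bookkeeping of indices: one must check that the asymmetry $\sum_{l<k}(a_{lk}+\varepsilon_{lk})-\sum_{l>k}(a_{kl}-b_{kl})$ in the Stirling arguments is faithfully reproduced by the \emph{order} of peeling. A cycle $\rho_k$ receives a contribution $a_{lk}+\varepsilon_{lk}$ from each earlier-peeled cycle $\rho_l$ with $l<k$ (through the $+2$ and the cross power created when its $c$-factor was expanded against $\rho_l$ by Lemma~1), and a contribution $-(a_{kl}-b_{kl})$ from each later cycle $\rho_l$ with $l>k$ (through the Vandermonde splitting of its Theorem~3 binomial at the moment $\rho_k$ itself is peeled). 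Verifying that these two sources assemble into the stated arguments, and that the coefficient product $\prod_{1\le i<j\le r}$ is built up one column at a time with the $\varepsilon_{ij}=0$ pairs contributing the trivial factor $1$, is the crux; everything else is a routine iteration of the two-cycle computation.
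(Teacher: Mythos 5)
Your proposal is correct and follows essentially the same route as the paper's proof: induction on $r$, peeling $\rho_1$ by the normalized Murnaghan rule, invoking analytic continuation for the non-partition argument, expanding each $c^{\la-\rho_1\epsilon_k}_{i_j}(\rho_j)$ by Lemma~1, applying Theorem~3 to eliminate the sum over $k$, then splitting the resulting single binomial by Chu--Vandermonde and applying Lemma~2 once per index $j$ to recover the product form of the coefficient. You have also correctly identified the one genuinely new ingredient beyond Theorem~5 (the Chu--Vandermonde factorization of $\binom{\sum_j(u_j+v_j)}{m}$) and the index bookkeeping $z_j=t_j+a_{1j}+\varepsilon_{1j}$, both of which are exactly the steps the paper carries out.
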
 
\begin{remark}
The right-hand side is a finite sum. Indeed for any $s\ge 1$, if we sum up the $r-s+1$ conditions
\[i_k+\sum_{l<k}(a_{lk}+\varepsilon_{lk})-\sum_{l>k} (a_{kl}-b_{kl})\le \rho_k+1\]
from $k=s$ to $k=r$, we obtain 
\[\sum_{k=s}^{r} \Big(i_k+\sum_{l<k}\varepsilon_{lk}+\sum_{l>k}b_{kl}+\sum_{l<s}a_{lk}\Big) \le \sum_{k=s}^{r} (\rho_k+1).\]
Hence any summation quantity remains bounded.
\end{remark}
\begin{proof}
Assuming the property true for $r-1$, we shall apply the Murnaghan rule under the form
\begin{equation*}
(n)_{|\rho|}\, \hat{\chi}^\la_\mu=\sum_{l=1}^{l(\la)} d_\la(l,\rho_1)\,
(n-\rho_1)_{|\rho|-\rho_1}\, \hat{\chi}^{\la -\rho_1\epsilon_l}_{\mu \setminus \rho_1}.
\end{equation*}
In general $\la -\rho_1\epsilon_l$ is not a partition. However by analytic continuation of a shifted symmetric polynomial, the recurrence assumption still writes
\begin{equation*}
(n-\rho_1)_{|\rho|-\rho_1}\,\hat{\chi}^{\la -\rho_1\epsilon_l}_{\mu \setminus \rho_1}=
\sum_{\varepsilon' \in \{0,2\}^{(r-1)(r-2)/2}} \,
\sum_{(i_2,\ldots,i_r)\in \mathsf{N}^{r-1}}\,
A_{i_2,i_3,\ldots,i_r}^{(\varepsilon')}(\rho_2,\ldots,\rho_r)
\prod_{k=2}^r \,c^{\la -\rho_1\epsilon_l}_{i_k}(\rho_k),
\end{equation*}
with $\varepsilon'$ standing for $\{\varepsilon_{ij}, 2\le i<j \le r\}$. By Lemma $1$ each $c^{\la -\rho_1\epsilon_l}_{i_k}(\rho_k)$ can be written
\begin{equation*}
\begin{split}
c^{\la -\rho_1\epsilon_l}_{i_k}(\rho_k)&=c^{\la}_{i_k}(\rho_k)+
\rho_1\rho_k\sum_{\begin{subarray}{c}r,s,t\ge 0 \\ r+s+t+2=i_k\end{subarray}}
c^{\la}_t(\rho_k) (\la_l-l+\rho_k+1)^r (\la_l-l-\rho_1+1)^s\\
&=c^{\la}_{i_k}(\rho_k)+
\rho_1\rho_k\sum_{\begin{subarray}{c}r,s,t,u,v\ge 0 \\ r+s+t+2=i_k\end{subarray}}c^{\la}_t(\rho_k)
\binom{r}{u}\binom{s}{v}(1+\rho_k)^{r-u} (1-\rho_1)^{s-v}\,(\la_l-l)^{u+v},
\end{split}
\end{equation*}
Defining
\[B_k(r,s,u,v)=(-1)^{u+v}\binom{r}{u}\binom{s}{v}(1+\rho_k)^{r-u} (1-\rho_1)^{s-v},\]
we obtain
\[c^{\la -\rho_1\epsilon_l}_{i_k}(\rho_k)=
\sum_{\varepsilon_{1k}\in \{0,2\}}\,
\theta_{1k}\sum_{\begin{subarray}{c}r_k,s_k,t_k\ge 0 \\ r_k+s_k+t_k+\varepsilon_{1k}=i_k\end{subarray}} c^{\la}_{t_k}(\rho_k)
\sum_{u_k,v_k\ge0}B_k(r_k,s_k,u_k,v_k)\,(-\la_l+l)^{u_k+v_k},\]
with $\theta_{1k}=1$ if $\varepsilon_{1k}=0$, $\theta_{1k}=\rho_1\rho_k$ if $\varepsilon_{1k}=2$, and the convention that the sum on $r_k,s_k$ is restricted to $r_k=s_k=0$ when $\varepsilon_{1k}=0$.
Inserting this expression in the recurrence assumption, we get
\begin{multline*}
(n-\rho_1)_{|\rho|-\rho_1}\,\hat{\chi}^{\la -\rho_1\epsilon_l}_{\mu \setminus \rho_1}=
\sum_{\varepsilon \in \{0,2\}^{r(r-1)/2}} \,
\sum_{(t_2,\ldots,t_r)\in \mathsf{N}^{r-1}}\,
\prod_{k=2}^r \, \theta_{1k}\, c^{\la}_{t_k}(\rho_k) \\\times
\sum_{\begin{subarray}{c}r_k,s_k\ge 0 \\ u_k,v_k\ge 0\end{subarray}}B_k(r_k,s_k,u_k,v_k)\,(-\la_l+l)^{u_k+v_k}
A_{z_2,z_3,\ldots,z_r}^{(\varepsilon')}(\rho_2,\ldots,\rho_r),
\end{multline*}
where for clarity of display, the notation $z_j$ $(2\le j \le r)$ stands for $z_j=r_j+s_j+t_j+\varepsilon_{1j}$. 
\newpage
Now we may insert this expression in the Murnaghan rule and apply Theorem 3. We obtain
\begin{multline*}
(n)_{|\rho|}\, \hat{\chi}^\la_\mu=
\sum_{\varepsilon \in \{0,2\}^{r(r-1)/2}} \,
\sum_{(t_2,\ldots,t_r)\in \mathsf{N}^{r-1}}\,
\prod_{k=2}^r \, \theta_{1k}\, c^{\la}_{t_k}(\rho_k) \\\times
\sum_{\begin{subarray}{c}r_k,s_k\ge 0 \\ u_k,v_k\ge 0\end{subarray}}B_k(r_k,s_k,u_k,v_k)\,
A_{z_2,z_3,\ldots,z_r}^{(\varepsilon')}(\rho_2,\ldots,\rho_r)
\\\times
\sum_{t_1,j\ge 0} (-1)^j \binom{\sum_{l\ge 2}(u_l+v_l)}{j} s(\rho_1+1,t_1-j)\,c^\la_{t_1}(\rho_1).
\end{multline*}
Writing $j=\sum_{l\ge 2}j_l$ and applying the Chu-Vandermonde formula, this can be rewritten
\begin{multline*}
(n)_{|\rho|}\, \hat{\chi}^\la_\mu=
\sum_{\varepsilon \in \{0,2\}^{r(r-1)/2}} \,
\sum_{(t_1,\ldots,t_r)\in \mathsf{N}^r}\,
c^\la_{t_1}(\rho_1) \prod_{k=2}^r c^\la_{t_k}(\rho_k)\, 
\theta_{1k}\\\times 
\sum_{\begin{subarray}{c}r_k,s_k,j_k\ge 0 \\ u_k,v_k\ge 0\end{subarray}}(-1)^{j_k} B_k(r_k,s_k,u_k,v_k)\binom{u_k+v_k}{j_k} \\\times
s\Big(\rho_1+1,t_1-\sum_{l>1}j_l\Big) \,
A_{z_2,z_3,\ldots,z_r}^{(\varepsilon')}(\rho_2,\ldots,\rho_r).
\end{multline*}
But by Lemma 2 we have
\[\sum_{\begin{subarray}{c}r_k,s_k,u_k,v_k\ge 0 \\ r_k+s_k=a_{1k}\end{subarray}}
B_k(r_k,s_k,u_k,v_k)\binom{u_k+v_k}{a_{1k}-b_{1k}} 
=(-1)^{a_{1k}-b_{1k}} \binom{a_{1k}+1}{b_{1k}+1}
\frac{\rho_k^{b_{1k}+1}-(-\rho_1)^{b_{1k}+1}}{\rho_k+\rho_1}.\] 
Writing $r_k+s_k=a_{1k}$ and $j_k=a_{1k}-b_{1k}$, $(2\le k \le r)$, we obtain
\begin{multline*}
(n)_{|\rho|}\, \hat{\chi}^\la_\mu=
\sum_{\varepsilon \in \{0,2\}^{r(r-1)/2}} \,
\sum_{(t_1,\ldots,t_r)\in \mathsf{N}^{r}}\,
\sum_{\begin{subarray}{c}(a_{12}\ldots a_{1r})\\ (b_{12}\ldots b_{1r})\end{subarray}}\,c^\la_{t_1}(\rho_1) \,\prod_{k=2}^r c^\la_{t_k}(\rho_k)\,
A_{z_2,z_3,\ldots,z_r}^{(\varepsilon')}(\rho_2,\ldots,\rho_r)
\\\times 
\theta_{1k}\, \binom{a_{1k}+1}{b_{1k}+1}
\frac{\rho_1(-\rho_1)^{b_{1k}}+\rho_k^{b_{1k}+1}}{\rho_1+\rho_k}
\,s\Big(\rho_1+1,t_1-\sum_{l>1}(a_{1l}-b_{1l})\Big),
\end{multline*}
where $z_j$ stands now for $z_j=t_j+a_{1j}+\varepsilon_{1j}$.
Hence the result.
\end{proof}

For $\rho_r=1$ either all $\varepsilon_{lr}, 1\le l \le r-1$, are equal to $0$, and the sum is restricted to $i_r=2$. Either only one $\varepsilon_{lr}$ is equal to $2$, and the sum is restricted to $i_r=0$. Or two or more $\varepsilon_{lr}$ are equal to $2$, such a contribution being zero. Summing up the $r$ non zero contributions brings a factor $(n-\sum_{i=1}^{r-1}\rho_i)$, and we recover the formula for $r-1$.

Ingram~\cite{In} had only computed the case $\rho=(2,2,2)$ (with many misprints). It writes
\begin{equation*}
\begin{split}
(n)_6\, \hat{\chi}^\la_{2,2,2,1^{n-6}}=&
\left(c_3^\la(2)-3 c_2^\la(2)\right)^3+
2\left(c_3^\la(2)-3c_2^\la(2)\right)\left(27c_3^\la(2)-47c_2^\la(2)-6c_4^\la(2)\right)\\
&+40c_5^\la(2)-240c_4^\la(2)+560c_3^\la(2)-600c_2^\la(2).
\end{split}
\end{equation*}
For $\rho=(3,2,2)$ a new example is
\begin{equation*}
\begin{split}
(n)_7\, \hat{\chi}&^\la_{3,2,2,1^{n-7}}=\\
&\left(\left(c_3^\la(2)-3c_2^\la(2)\right)^2-4c_4^\la(2)+18c_3^\la(2)-50c_2^\la(2)\right)\left(c_4^\la(3)-6c_3^\la(3)+11c_2^\la(3)\right)\\
&-12\left(c_3^\la(2)-3c_2^\la(2)\right)\left(c_5^\la(3)-8c_4^\la(3)+23c_3^\la(3)-28c_2^\la(3)\right)\\
&+72\left(c_6^\la(3)-10c_5^\la(3)+40c_4^\la(3)-80c_3^\la(3)+79c_2^\la(3)\right).
\end{split}
\end{equation*}

\begin{remark} 
As mentioned in Remark 3, since the order of the $\rho_k$ is irrelevant, there are many ways of writing  the right-hand side in terms of the $c_j^\la$ 's. Their equivalence yields identities between the $c_j^\la$ 's.
\end{remark}

\section{Jucys-Murphy elements}

We briefly recall the connection of our results with the structure of the center $\mathcal{Z}_n$ of the group algebra $\mathcal{C}S_n$ of $S_n$. These facts are not new, and may be found in~\cite[Section~4]{Co}.

Given a partition $\mu$, denote $\mathcal{C}_\mu$ the conjugacy class of permutations having cycle-type $\mu$, and identify this class with the formal sum of its elements. Then it is well known that we obtain a basis of $\mathcal{Z}_n$. 

For $1\le i\le n$ the Jucys-Murphy elements $J_i$ are defined by $J_i=\sum_{j<i} (ji)$, where $(ji)$ is a transposition. These elements were introduced independently in~\cite{Ju}  and~\cite{My}. They generate a maximal commutative subalgebra of $\mathcal{C}S_n$.

Jucys proved the two following fundamental properties. 
\begin{itemize}
\item[(i)]We have $\mathcal{S}[J_1,\ldots,J_n]=\mathcal{Z}_n$. More precisely the elementary symmetric functions $e_k(J_1,\ldots,J_n)$ are given by
\[e_k(J_1,\ldots,J_n)=\sum_{|\mu|-l(\mu)=k} \mathcal{C}_\mu.\]
\item[(ii)]Viewing any central function  $\chi$ as the formal sum $\sum_{\sigma}\chi(\sigma) \sigma$, for any symmetric function $f$, we have
\[f(J_1,\ldots,J_n)\, \chi^\la=f(A_\la)\, \chi^\la.\]
\end{itemize}

Now for any $\mu=(\rho,1^{n-|\rho|})$ with $\rho=(\rho_1,\ldots,\rho_r)$ having no part $1$, define  $\boldsymbol{z}_\mu=z_\rho$. As in Section 2.4, we write
\[(n)_{|\rho|}\, \hat{\chi}^\la_\mu=f_\mu(A_\la),\]
for a unique $f_\mu \in \mathsf{R}[\textrm{card},p_1,p_2,p_3,\ldots]$. Therefore the central character of $\la$ at $\mu$ is 
\[\omega^\la_\mu=n!\,{z}_\mu^{-1} \hat{\chi}^\la_\mu=\boldsymbol{z}_\mu^{-1}f_\mu(A_\la).\]
Thus we have
\[\mathcal{C}_\mu\, \chi^\la=\omega^\la_\mu\, \chi^\la = \boldsymbol{z}_\mu^{-1}f_\mu(A_\la)\, \chi^\la=\boldsymbol{z}_\mu^{-1}f_\mu(J_1,\ldots,J_n)\, \chi^\la,\]
which yields immediately
\[\mathcal{C}_\mu= \boldsymbol{z}_\mu^{-1}f_\mu(J_1,\ldots,J_n).\]

Since $f_\mu$ is unique, this expression of $\mathcal{C}_\mu$ as a symmetric function in the Jucys-Murphy elements is unique. 
Similarly from
\[e_k(A_\la)=\sum_{|\mu|-l(\mu)=k} \boldsymbol{z}_\mu^{-1}f_\mu(A_\la),\]
we obtain the decomposition
\[e_k=\sum_{|\mu|-l(\mu)=k} \boldsymbol{z}_\mu^{-1}f_\mu.\]
For instance we have
\[e_2= \frac{1}{3}f_{31^{n-3}}+\frac{1}{8}f_{221^{n-4}}=
p_2-\binom{n}{2}+\frac{1}{2}p_1^2-\frac{3}{2}p_2+\binom{n}{2}.\]

The expression of the power sums of the Jucys-Murphy elements $p_k(J_1,\ldots,J_n)$ in terms of the conjugacy classes $\mathcal{C}_\mu$ was studied in~\cite{LT}. To give a similar expression of $F_{npk}(J_1,\ldots,J_n)$ is an interesting open problem. 

\section{Application to Hecke algebras}

The Hecke algebra $H_n(q_1,q_2)$ is a deformation of the group algebra $\mathcal{C}S_n$. More precisely $H_n(q_1,q_2)$ is the algebra over $\mathsf{C}(q_1,q_2)$, the field of rational functions in two indeterminates $(q_1,q_2)$, generated by $T_1,T_2,...,T_{n-1}$ with relations 
\[(T_i-q_1)(T_i-q_2)=0,\quad \quad T_iT_{i+1}T_i =T_{i+1}T_iT_{i+1},
\quad \quad T_iT_j =T_jT_i, \quad \textrm{if}\quad |i-j|>1.\]

The case of $S_n$ corresponds to $q_1=-q_2=1$, and $T_i=s_i$, the simple transposition switching $i$ and $i+1$. The algebra $H_n(q_1,q_2)$ has a linear basis $\{T_\sigma, \sigma\in S_n\}$ defined by $T_\sigma=T_{s_{i_1}}T_{s_{i_2}}\ldots T_{s_{i_k}}$ where $s_{i_1}s_{i_2}\ldots s_{i_k}$ is any reduced decomposition of $\sigma$. 

Like those of $S_n$, the irreducible representations of $H_n(q_1,q_2)$ are indexed by partitions $|\la|=n$ and have dimension $\textrm{dim}\,\la$. We shall denote $\chi^\la_H$ (resp. $\hat{\chi}^\la_H$) the corresponding character (resp. normalized character).

For any permutation $\sigma\in S_n$ with cycle-type $\mu=(\mu_1,\mu_2,\ldots,\mu_l)$, we write $T_\mu=T_{\gamma_{\mu_1} \times \gamma_{\mu_2}\ldots \times \gamma_{\mu_l}}$, with $\gamma_k$ the $k$-cycle $s_{k-1}s_{k-2}\ldots s_1\in S_k$.
Then it is known~\cite[Corollary~5.2, p.~477]{R} that the characters $\chi^\la_H$ are uniquely determined by their values $\chi^\la_H(T_\mu), |\mu|=n$.

In the following we adopt ``$\la$-ring'' notations (see ~\cite[p.~471]{R} or~\cite[p.~220]{La4} for a short survey). In other words, given any symmetric function $f$ taken on an alphabet $A$, we shall write $f[(q_1+q_2)A]$ for the image of $f$ under the ring homomorphism uniquely determined by 
\[p_k[(q_1+q_2)A]=(q_1^k-(-q_2)^k)p_k(A)=(q_1^k-(-q_2)^k)\sum_{a\in A}a^k.\]

Then we have the generalized Frobenius formulas
\begin{equation*}
\begin{split}
(q_1+q_2)^{-l(\mu)} h_{\mu}[(q_1+q_2)A]&=\sum_{\la}\chi^\la_H(T_\mu) \, s_\la(A),\\
s_{\la}[(q_1+q_2)A]&=\sum_{\mu} (q_1+q_2)^{l(\mu)} \chi^\la_H(T_\mu) \, m_\mu(A),
\end{split}
\end{equation*}
with $m_\mu(A)$ the monomial symmetric function of $A$, i.e. the sum of distinct monomials $\prod_{i}a_i^{m_i}$ such that $(m_i)$ is a permutation of $\mu$. The first of these formulas was proved in~\cite[Theorem~4.14, p.~475]{R}, see also~\cite{RR}. For their equivalence, see~\cite{D}. 

By a classical result (see for instance~\cite[p.~238]{La4}), for any positive integer $r$ we have
\[h_{r}[(q_1+q_2)A]= \sum_{|\rho|=r}z_\rho^{-1} \prod_{i=1}^{l(\rho)} 
(q_1^{\rho_i}-(-q_2)^{\rho_i})\, p_\rho(A).\]
As a direct consequence, for $\mu=(\mu_1,\ldots,\mu_l)$ we have
\[\hat{\chi}^\la_H(T_\mu)= (q_1+q_2)^{-l} \sum_{|\rho^{(1)}|=\mu_1,\cdots,|\rho^{(l)}|=\mu_l}
\prod_{i=1}^{l} z_{\rho^{(i)}}^{-1}
\prod_{j=1}^{l(\rho^{(i)})}(q_1^{\rho^{(i)}_j}-(-q_2)^{\rho^{(i)}_j})\,
\hat{\chi}^\la_{\cup_i\rho^{(i)}},\]
with $\hat{\chi}^\la_{\cup_i\rho^{(i)}}$ given by Theorem $6$.
Here the partition $\mu\cup\nu$ is formed by parts of $\mu$ and $\nu$.

\section{Final remark}

In this paper the normalized characters of the symmetric group have been written in terms of the contents of the partition $\la$. A different approach has been recently devoted to the same problem, using ``Kerov polynomials''.

In this alternative framework the ``free cumulants'' of $\la$ are considered. These probabilistic quantities $R_i(\la),i \ge 2$ arise in the asymptotic study of the representations of symmetric groups~\cite{B1}. 

In the simplest case of a $p$-cycle $\mu=(p,1^{n-p})$, Kerov set the problem of writing the normalized character $(n)_p\, \hat{\chi}^\la_{p,1^{n-p}}$ as a polynomial in the free cumulants $R_i(\la)$. He conjectured that the coefficients of this polynomial are positive integers.

In \cite{B2,GR} this normalized character was obtained as a particular coefficient in some Taylor series. Special cases were also computed. However Kerov's positivity conjecture is still open. It would be interesting to study the connection between both descriptions (contents vs free cumulants).


\begin{thebibliography}{29}
\bibitem{B1}
P.\ Biane, \emph{Representations of symmetric groups and free probability}, Adv. Math. \textbf{138} (1998), 126Ð-181.
\bibitem{B2}
P.\ Biane, \emph{On the formula of Goulden and Rattan for Kerov polynomials}, S\'em. Lothar. Combin., \textbf{55} (2006), article B55d. 
\bibitem{Co}
S.\ Corteel, A.\ Goupil, G.\ Schaeffer, \emph{Content evaluation and class symmetric functions}, Adv. Math. \textbf{188} (2004), 315--336.
\bibitem{D}
J.\ D\'esarm\'enien, \emph{Une g\'en\'eralisation des caract\`eres du groupe sym\'etrique}, unpublished note (april 1996).
\bibitem{Fr}
G.\ Frobenius, \emph{\"Uber die Charaktere der Symmetrischen Gruppe}, S\"utzungsberichte der K\"oniglich Preussischen Akademie der Wissenschaften zu Berlin (1900), 516--534. Reprinted in \emph{Gessamelte Abhandlungen} \textbf{3}, 148Ð-166.
\bibitem{Ga}
A.\ Garsia, \emph{Young seminormal representation, Murphy elements and content evaluations}, lecture notes (march 2003), available at 
http://www.math.ucsd.edu/{\textasciitilde}garsia/ recentpapers/
\bibitem{Go}
D.\ M.\ Goldschmidt, \emph{Group characters, symmetric functions and the Hecke algebra}, University Lecture Series \textbf{4}, Amer. Math. Soc., Providence, 1991.
\bibitem{GR}
I.\ P.\ Goulden, A.\ Rattan, \emph{An explicit form for Kerov's character polynomials}, Trans. Amer. Math. Soc. to appear, ArXiv math.CO/0505317. 
\bibitem{In}
R.\ E.\ Ingram, \emph{Some characters of the symmetric group}, Proc. Amer. Math. Soc. \textbf{1} (1950), 358--369.
\bibitem{IO}
V.\ Ivanov, G.\ I.\ Olshanski, \emph{Kerov's central limit theorem for the Plancherel measure on Young diagrams}, in \emph{Symmetric functions 2001: Surveys of developments and perspectives}, 93--151, Kluwer, 2002. 
\bibitem{Ju}
A.\ A.\ Jucys, \emph{Symmetric polynomials and the center of the symmetric group ring}, Rep. Math. Phys. \textbf{5} (1974), 107--112.
\bibitem{Ka}
J.\ Katriel, \emph{Explicit expressions for the central characters of the symmetric group}, Discrete Applied Math. \textbf{67} (1996), 149-Ð156.
\bibitem{KO}
S.\ V.\ Kerov, G.\ I.\ Olshanski, \emph{Polynomial functions on the set of Young diagrams}, C.\ R.\ Acad.\ Sci.\ Paris S\'er.\ I, \textbf{319} (1994), 121--126.
\bibitem{Ls}
A.\ Lascoux, \emph{Notes on interpolation in one and several variables}, available at http://igm.univ-mlv.fr/{\textasciitilde}al/
\bibitem{LT}
A.\ Lascoux, J.-Y.\ Thibon, \emph{Vertex operators and the class algebras of symmetric groups}, Zapiski Nauchnyh Seminarov POMI, \textbf{283} (2001), 156--177.
\bibitem{La1}
M.\ Lassalle, \emph{Some combinatorial conjectures for Jack polynomials}, Ann. Comb. \textbf{2} (1998), 61--83.
\bibitem{La4}
M.\ Lassalle, \emph{Une $q$-sp\'ecialisation pour les fonctions
sym\'etriques monomiales}, Adv.\ Math., \textbf{162} (2001), 217--242.
\bibitem{La2}
M.Ö Lassalle, \emph{A new family of positive integers}, Ann. Comb. \textbf{6} (2002), 399--405.
\bibitem{La3}
M.\ Lassalle, \emph{Jack polynomials and some identities for partitions}, Trans. Amer. Math. Soc. \textbf{356} (2004), 3455--3476.
\bibitem{La5}
M.\ Lassalle, \emph{Explicitation of characters of the symmetric group}, C.\ R.\ Acad.\ Sci.\ Paris S\'er.\ I, \textbf{341} (2005), 529--534.
\bibitem{W}
M.\ Lassalle, available at http://igm.univ-mlv.fr/{\textasciitilde}lassalle/char.html
\bibitem{Ma}
I.\ G.\ Macdonald, \emph{Symmetric functions and Hall polynomials}, Clarendon Press, second edition, Oxford, 1995.
\bibitem{Mu}
F.\ D.\ Murnaghan, \emph{On the representations of the symmetric group}, Amer. J. Math. \textbf{59} (1937), 739Ð-753.
\bibitem{My}
G.\ E.\ Murphy, \emph{A new construction of Young's seminormal representation of the symmetric group}, J. Algebra \textbf{69} (1981), 287--291.
\bibitem{Na}
T.\ Nakayama, \emph{On some modular properties of irreducible representations of the symmetric group}, Jap. J. Math. \textbf{17} (1940), 165--184, 411--423.
\bibitem{OO}
A.\ Okounkov, G.\ I.\ Olshanski, \emph{Shifted Schur functions}, St. Petersburg Math. J. \textbf{9} (1998), 239--300.
\bibitem{R}
A.\ Ram, \emph{A Frobenius formula for the characters of the Hecke algebras}, Invent. Math. \textbf{106} (1991), 461-Ð488. 
\bibitem{RR}
A.\ Ram, J.\ B.\ Remmel, \emph{Applications of the Frobenius formulas for the characters of the symmetric group and the Hecke algebras of type A}, J. Alg. Comb. \textbf{6} (1997), 59-Ð87.
\bibitem{Su}
M.\ Suzuki, \emph{The values of irreducible characters of the symmetric group}, Amer. Math. Soc. Proceedings of Symposia in Pure Math. \textbf{47} (1987), 317--319.
\bibitem{VK}
A.\ M.\ Vershik, S.\ V.\ Kerov, \emph{Asymptotic theory of characters of symmetric groups}, Funct. Anal. Appl. \textbf{15} (1981), 246--255.
\bibitem{Z}
Jiang\ Zeng, private communication.

\end{thebibliography}
\end{document}